\documentclass[12pt,a4paper,final]{article}
\usepackage{amssymb, amsmath, amsthm}
\usepackage{mathrsfs}  
\usepackage{bbm}
\usepackage{graphicx}
\usepackage[ocgcolorlinks, linkcolor=red]{hyperref}
\usepackage{geometry}\geometry{margin=1in}
%\newgeometry{asymmetric, centering}
 \usepackage{xcolor}
 \usepackage{tcolorbox}
\usepackage{mathrsfs}  

%% Use good geometry for margin notes
%\usepackage[hmarginratio=2:3]{geometry}

%%%%%%%%%%%%%%%%%
%\usepackage[dvipsnames]{xcolor}
%\usepackage{etoolbox}
%\patchcmd{\section}{\normalfont}{\normalfont\color{darkblue}}{}{}
%\patchcmd{\subsection}{\normalfont}{\normalfont\color{darkblue}}{}{}
%\patchcmd{\subsubsection}{\normalfont}{\normalfont\color{black}}{}{}
\usepackage{bm}
%\usepackage{subfig,tikz}
%\usepackage{wrapfig}
%\usepackage{color}
%\usepackage{xspace}
%\usetikzlibrary{shapes.geometric}
%\usepackage{tocloft}

\usepackage{ifdraft}
\ifoptionfinal{
\usepackage[disable]{todonotes}
}{
\usepackage[norefs, nocites]{refcheck}

\usepackage[notref, notcite]{showkeys}
\usepackage[bordercolor=white, color=white]{todonotes}
}

%% HACK from Sec. 1.6.4 of the manual of todonotes package
\makeatletter\providecommand\@dotsep{5}\def\listtodoname{List of Todos}\def\listoftodos{\hypersetup{linkcolor=black}\@starttoc{tdo}\listtodoname\hypersetup{linkcolor=blue}}\makeatother
%%%%%%%%%%%%%%%%%%

%\usepackage{graphicx}
%\usepackage[colorlinks=true, citecolor=green, linkcolor=red, urlcolor=red]{hyperref}
% \usepackage{xcolor}

\usepackage{hyperref}
\usepackage{etoolbox}

\newtheorem{theorem}{Theorem}[section]
\newtheorem{lemma}[theorem]{Lemma}

\theoremstyle{remark}
\newtheorem{remark}{Remark}

\numberwithin{equation}{section}

%\usepackage[colored]{shadethm} 
%\definecolor{shaderulecolor}{rgb}{0.651,0.074,0.090}% couleur de l'encadré 

%\newshadetheorem{theorem}{Theorem}[section]
%\newtheorem{lemma}[theorem]{Lemma}
%\newtheorem{proposition}[theorem]{Proposition}
%\newshadetheorem{corollary}[theorem]{Corollary}
%\theoremstyle{definition}
%\newtheorem{definition}[theorem]{Definition}
%\newtheorem{hypothesis}[theorem]{Hypothesis}
%\theoremstyle{remark}
%\newtheorem{remark}[theorem]{Remark}
%\newtheorem{example}[theorem]{Example}

%% definitions %%%%%%%%% 
\def\C{\mathbb C}
\def\R{\mathbb R}

\def\N{\mathbb N}

\renewcommand{\leq}{\leqslant}
\renewcommand{\geq}{\geqslant}

\def\p{\partial}

\newcommand*\xbar[1]{%
   \hbox{%
     \vbox{%
       \hrule height 0.5pt % The actual bar
       \kern0.5ex%         % Distance between bar and symbol
       \hbox{%
        % \kern-0.05em%      % Shortening on the left side
         \ensuremath{#1}%
         %\kern-0.05em%      % Shortening on the right side
       }%
     }%
   }%
} 

%\def\inter{\text{int}}
%\let\dim\relax
%\DeclareMathOperator{\dim}{dim}

%% metadata %%%%%%%%%
%\date{Compiled \today}
\date{}
\title{Recovery of Sturm-Liouville operators from partial boundary spectral data and applications}

\author{Ali Feizmohammadi \thanks{Department of Mathematics, University of Toronto, 3359 Mississauga Road, Mississauga, ON L5L1C6 (ali.feizmohammadi@utoronto.ca).} \and Yavar Kian \thanks{Univ Rouen Normandie, CNRS, Normandie Univ, LMRS UMR 6085, F-76000 Rouen, France. (yavar.kian@univ-rouen.fr)}}

%\keywords{}
%%%%%%%%%%%%%%%%%%

\begin{document}

%\begin{titlepage}
%\maketitle
%\end{titlepage}

\maketitle

\begin{abstract}
	We study the inverse Sturm--Liouville problem on a finite interval from partial knowledge of spectral data. Specifically, we show that the potential can be uniquely reconstructed from the knowledge of a fraction of Dirichlet eigenvalues together with the normal derivatives of the corresponding eigenfunctions at both endpoints. We present two novel applications of our spectral result in  inverse coefficient determination problems for evolutionary PDEs that include passive wave-based imaging of a medium and active imaging for the time-dependent Schr\"odinger equation with unknown internal sources. Our results yield finite time measurement bounds for such inverse coefficient determination problems. A central innovation is the use of Kahane’s interpolation theorem to analyze endpoint time traces of solutions, enabling the recovery without requiring analyticity assumptions or infinite-time data, as in previous approaches. Finally, in the appendix, we present a spectral interpolation theorem for one-dimensional Schr\"odinger operators, which may be of independent interest.
\end{abstract}

%\tableofcontents
\section{Introduction and main results}

The inverse Sturm-Liouville problem, a classical yet perpetually rich area of mathematical analysis, centers on the reconstruction of the potential function \( V(x) \) and boundary conditions from spectral data associated with the Schr\"odinger operator $-\frac{d^2}{dx^2}+V(x)$ on $(0,1)$ subject to Robin boundary conditions at both end points. The spectral data typically consists of the eigenvalues $\{\lambda_n \}_{n=1}^\infty$ and corresponding norming constants$\{\gamma_n \}_{n=1}^\infty$, or alternatively, the Weyl--Titchmarsh \( m \)-function, see \cite{Mar52}. The theory started with a rigidity result proven by Ambarzumian in 1929 \cite{Ambarzumian1929berEF}; if the Neumann eigenvalues of a Schr\"odinger operator on the interval $(0,1)$ coincide with the sequence of numbers $n^2\pi^2$, $n=0,1,2,\ldots$, then the potential must be identically zero. However, this result turns out to be rather special. Indeed, the pioneering work of Borg~\cite{Borg1946EineUD} establishes that in the absence of symmetries for $V$, a single spectrum by itself does not suffice to uniquely determine \( V \); however, the knowledge of two spectra corresponding to different boundary conditions ensures uniqueness. Levinson~\cite{Levinson} refined this result by identifying conditions under which a single spectrum, augmented with boundary data extracted from the eigenfunctions, guarantees uniqueness. The landmark work of Gelfand and Levitan~\cite{GelLev51} introduced an integral equation method that not only proves uniqueness but also provides an explicit reconstruction algorithm from the full set of eigenvalues and norming constants. Another classical result which is also pertinent to our study is due to Hochstadt and Lieberman \cite{HL78}, which says that if the potential is known on half of the interval, one spectrum recovers the whole potential. We refer also to the work of Marchenko \cite{Mar52} for recovery of $V$ from its associated spectral measure or the Weyl function, and \cite{MCLAUGHLIN1988354} for recovery of the potential from the knowledge of nodal points of the spectrum. As we will discuss later, the inverse spectral problem above has profound connections to inverse coefficient determination problems for evolution equations. Let us make the terminological comment that throughout the paper we work with one-dimensional Schr\"odinger operators, which serve as the standard normal form of scalar Sturm-Liouville problems via the classical Liouville transformation.

A key achievement in the inverse spectral theory is the characterization theorem: given a sequence \( \{\lambda_n, \gamma_n\}_{n=1}^\infty \) satisfying certain asymptotic and positivity conditions, there exists a unique potential \( V \in L^2((0,1)) \) and boundary conditions such that $\lambda_n$'s correspond to the spectrum of the Sturm-Liouville problem and, without being explicit, the $\gamma_n$'s are related to some norming constants for its eigenfunctions. This result can be interpreted in terms of the inverse spectral map:
\begin{equation}\label{ip_spectral_formulation}
V \mapsto \left\{ \lambda_n, \gamma_n \right\}_{n=1}^\infty,
\end{equation}
which is proven to be bijective under appropriate regularity and normalization assumptions. This has a simple but profound implication; even the absence of one eigenvalue from the inverse spectral dataset leads to nonuniqueness. This initiated a new direction of research by introducing mixed-data where some fraction of the spectrum may be missing from our dataset but this is augmented by knowing some additional local information about the potential function $V$ in lieu of the missing spectral data.

A classical contribution in this area is due to Gesztesy and Simon~\cite{Gesztesy1999InverseSA}, who demonstrated that if \( V \) is known on slightly more than half of the interval, say $[1/2 - \varepsilon, 1]$ for some $\varepsilon \in (0,\frac{1}{2})$, then the knowledge of slightly more than $(1-2\varepsilon)$-fraction of the Dirichlet spectrum determines \( V \) uniquely on the entire interval. This result exemplifies a common feature of inverse problems with partial spectral data: 
\begin{itemize}
\item[{\bf(H)}]{\em Uniqueness of $V$ from partial knowledge of spectral data often follows when the potential is known on slightly more than half the domain.}
\end{itemize}
A seminal result due to Horváth \cite{Horvath2005InverseSP} (see also \cite{RGS97,Hald1984DiscontinuousIE,Horvath2001OnTI}) establishes that the uniqueness in the inverse spectral recovery of a potential may be characterized in terms of the completeness of a system of exponential functions, constructed from the given spectral data and partial knowledge of the potential, forging a deep connection between inverse spectral problems and the theory of completeness for exponential systems.  Let us also mention that the local Borg--Marchenko type results, developed by Gesztesy and Simon \cite{GS2000} and subsequently simplified by Christer Bennewitz \cite{Bennewitz2001APO}, establish uniqueness of a potential on a subinterval from exponential high-energy control of the Weyl--Titchmarsh function. More precisely, equality of the potential on $[0,a]$ with $a<1$ is equivalent to exponential decay of the difference of the corresponding Weyl functions as the spectral parameter tends to infinity along a non-real ray. These results encode agreement of full Cauchy data at an interior point in an analytic form.

In more recent work, Marletta and Weikard~\cite{Marletta_2005} investigated the weak stability of inverse Sturm--Liouville problems under finite and possibly noisy spectral data, extending the theory to include complex-valued potentials. Their results provide quantitative bounds on the approximation of a potential given finitely many eigenvalues with measurement errors. %Hatinouglu~\cite{Hatinouglu2019MixedDI} introduced the use of \emph{mixed} spectral data—eigenvalues corresponding to different boundary conditions—as a means to improve identifiability and uniqueness, further broadening the spectrum of admissible data in inverse problems.

\subsection{Inverse spectral result with partial data}
\label{sec_inv_spectral_intro}

Recalling the limitation {\bf{(H)}} for all partial data inverse spectral results above related to injectivity of \eqref{ip_spectral_formulation} (with $n$ belonging to a proper subset $S\subset \N$), we aim to obtain rather complete results by taking a particular norming constant in our dataset, $\gamma_n$, with $n\in S$ together with some local knowledge of the potential near an end point with the fundamental goal of removing the necessity of knowing the potential on more than half of the interval as in \cite{Gesztesy1999InverseSA}. 

To formulate our result, let $V \in L^\infty((0,1))$ be a real-valued potential, and let $\{\phi_k\}_{k=1}^\infty \subset W^{2,\infty}((0,1))$ be an $L^2((0,1))$-Schauder basis consisting of Dirichlet eigenfunctions of the one-dimensional Schr\"odinger operator defined uniquely via 
\begin{equation}\label{eigen_functions}
\left(-\frac{d^2}{dx^2} + V(x)\right)\phi_k(x)= \lambda_k\, \phi_k(x) \qquad \text{on } (0,1),
\end{equation}
subject to the boundary conditions
\begin{equation}\label{initial_data}
\phi_k(0)=\phi_k(1)=0, \qquad \text{and}\qquad  \p_x\phi_k(0)=1.\end{equation}
In this paper, we ask whether the knowledge of a fraction of the Dirichlet spectrum along with the knowledge of the normal derivative of the corresponding eigenfunctions on small intervals allow us to recover $V$ globally. Our main contribution is to establish uniqueness without requiring a priori knowledge of the potential on a large subinterval.

\begin{theorem}
	\label{thm_spectral}
	For $j=1,2,$ let $V_j\in L^\infty((0,1))$ be real-valued and let us denote by $\lambda_1^{(j)}<\lambda_2^{(j)}<\ldots$ the Dirichlet eigenvalues for $-\frac{d^2}{dx^2}  + V_j(x)$ on $(0,1)$. Let $\{\phi_k^{(j)}\}_{k=1}^\infty\subset W^{2,\infty}((0,1))$ be as in \eqref{eigen_functions}--\eqref{initial_data} with $V=V_j$. Let $S\subset \N$ satisfy 
	\begin{equation}\label{S_prop}\limsup\limits_{r\to \infty} \frac{|S\cap (0,r)|}{r}\geq 1-\varepsilon\qquad \text{for some $\varepsilon \in (0,1)$}.\end{equation} 
	Assume also that $\mathrm{supp}\,(V_1-V_2)\subset [0,1-\varepsilon)$. If,
	\begin{equation}\label{spec_hypo}
		\lambda_k^{(1)}= \lambda_{k}^{(2)} \quad \text{and}\quad \p_x\phi_{k}^{(1)}(1) =\p_x\phi_{k}^{(2)}(1) \qquad \forall\, k\in S,
	\end{equation}
then $V_1=V_2$ on $(0,1)$.
\end{theorem}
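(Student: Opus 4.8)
The plan is to prove that $q:=V_1-V_2$, which by hypothesis is supported in $[0,1-\varepsilon)$, vanishes identically. For $j=1,2$ introduce the two fundamental solutions of $-y''+V_jy=\lambda y$ determined by the initial data $u_j(0,\lambda)=0,\ \p_x u_j(0,\lambda)=1$ and $c_j(0,\lambda)=1,\ \p_x c_j(0,\lambda)=0$; thus $\phi_k^{(j)}=u_j(\cdot,\lambda_k)$ whenever $\lambda_k$ is a Dirichlet eigenvalue of $V_j$. Writing $\Delta_j(\lambda)=u_j(1,\lambda)$ for the characteristic function (whose zeros are the $\lambda_k^{(j)}$) and $N_j(\lambda)=\p_x u_j(1,\lambda)$, the normalization \eqref{initial_data} gives $\p_x\phi_k^{(j)}(1)=N_j(\lambda_k)$, so the two conditions in \eqref{spec_hypo} read $\Delta_1(\lambda_k)=\Delta_2(\lambda_k)=0$ and $N_1(\lambda_k)=N_2(\lambda_k)$ for $k\in S$.

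The engine of the proof is the Lagrange--Wronskian identity. For the pair $(u_1,u_2)$, the function $u_1'u_2-u_1u_2'$ has $x$-derivative $q\,u_1u_2$; integrating over $(0,1)$ and evaluating the boundary terms with the initial data yields $\int_0^1 q\,u_1(\cdot,\lambda)u_2(\cdot,\lambda)\,dx=N_1(\lambda)\Delta_2(\lambda)-\Delta_1(\lambda)N_2(\lambda)$. Setting $\lambda=\lambda_k$ with $k\in S$ and using $\Delta_1(\lambda_k)=\Delta_2(\lambda_k)=0$ gives the first family of orthogonality relations
\begin{equation}\label{rel1}
\int_0^{1-\varepsilon} q\,\phi_k^{(1)}\,\phi_k^{(2)}\,dx=0,\qquad k\in S,
\end{equation}
where the support hypothesis has been used to truncate the integral. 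Running the same computation for the pair $(u_1,c_2)$ produces $\int_0^1 q\,u_1(\cdot,\lambda)c_2(\cdot,\lambda)\,dx=N_1(\lambda)c_2(1,\lambda)-\Delta_1(\lambda)\,\p_x c_2(1,\lambda)-1$; at $\lambda=\lambda_k$ the term with $\Delta_1(\lambda_k)$ drops out, while a second Wronskian computation (between $c_2$ and $u_2$) gives $c_2(1,\lambda_k)=1/N_2(\lambda_k)$. Hence the right-hand side equals $N_1(\lambda_k)/N_2(\lambda_k)-1$, which vanishes \emph{precisely because of the normal-derivative hypothesis} $N_1(\lambda_k)=N_2(\lambda_k)$. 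This produces the second family
\begin{equation}\label{rel2}
\int_0^{1-\varepsilon} q\,\phi_k^{(1)}\,c_2(\cdot,\lambda_k)\,dx=0,\qquad k\in S.
\end{equation}

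The point of having \emph{both} families is a matching of densities. Using the transmutation representation $u_j(x,\lambda)=\tfrac{\sin(\sqrt\lambda x)}{\sqrt\lambda}+\int_0^x K_j(x,t)\tfrac{\sin(\sqrt\lambda t)}{\sqrt\lambda}\,dt$ together with its cosine analogue for $c_j$, the products in \eqref{rel1}--\eqref{rel2} are the images, under a fixed invertible Volterra (transmutation) operator, of the elementary products $\tfrac{1-\cos(2\sqrt{\lambda_k}x)}{2\lambda_k}$ and $\tfrac{\sin(2\sqrt{\lambda_k}x)}{2\sqrt{\lambda_k}}$. Equivalently, combining \eqref{rel1}--\eqref{rel2} into $\int_0^{1-\varepsilon} q\,u_1(\cdot,\lambda_k)\big(c_2\pm i\sqrt{\lambda_k}\,u_2\big)(\cdot,\lambda_k)\,dx=0$ shows that $q$ is orthogonal, on the interval supporting it, to a system whose leading behaviour is the full exponential system $\{e^{\pm 2i\sqrt{\lambda_k}x}\}_{k\in S}$ (together with the constant). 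Since $\sqrt{\lambda_k}=k\pi+O(1)$, the frequency set $\{\pm 2\sqrt{\lambda_k}:k\in S\}$ has (Beurling) density at least $\tfrac{1-\varepsilon}{2\pi}$ by \eqref{S_prop}, whereas the critical density for completeness in $L^2(0,c)$ is $\tfrac{c}{2\pi}$; as $\supp q\subset[0,c]$ with $c<1-\varepsilon$, we sit strictly above the completeness threshold. Either family alone is a factor of two below this threshold, which is exactly why the normal-derivative data is indispensable.

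It remains to convert this supercritical density into genuine completeness and thereby force $q=0$; this is the delicate, borderline step and the main obstacle. The density in \eqref{S_prop} is only an upper density, and the system \eqref{rel1}--\eqref{rel2} is not literally a set of exponentials but a compact (Volterra) perturbation of one, so one cannot merely quote a classical completeness statement. I would resolve this by running a completeness argument for exponential systems adapted to these two features---handling the transmutation perturbation through the triangular invertibility of the transmutation operator, and extracting completeness from the upper-density hypothesis---which is precisely the content of the spectral interpolation theorem established in the appendix. Granting it, the relations \eqref{rel1}--\eqref{rel2} exhibit $q$ as orthogonal to a complete system in $L^2(0,c)$, whence $q\equiv 0$ and $V_1=V_2$ on $(0,1)$.
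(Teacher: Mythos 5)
Your Wronskian computations are correct: the identity $\bigl(u_1'u_2-u_1u_2'\bigr)'=(V_1-V_2)u_1u_2$ together with $\Delta_1(\lambda_k)=\Delta_2(\lambda_k)=0$ gives your first family of orthogonality relations, and your second family correctly isolates where the normal-derivative hypothesis enters, via $c_2(1,\lambda_k)=1/N_2(\lambda_k)$. The gap is the final step, which is the actual crux of the theorem: the completeness statement you need is never proven, and it is \emph{not} the content of Theorem~\ref{thm_interpolation}. That theorem is an interpolation result for index sets of \emph{subcritical} density ($D^+(P)<\varepsilon$), i.e.\ exactly the regime dual to non-completeness: from it one extracts biorthogonal/annihilating functions for sparse systems (this is precisely how the paper uses Kahane's theorem, e.g.\ in Lemma~\ref{lem_completeness}). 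It asserts nothing about completeness of a supercritical-density system, and moreover it concerns single eigenfunctions of one operator, not products of solutions of two different operators, which is what your relations involve. So the proof, as written, delegates its essential difficulty to a result that does not contain it.

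Filling this gap is genuinely nontrivial, for reasons your own remarks hint at. Your system has the doubled frequencies $\pm2\sqrt{\lambda_k}$, so the annihilating entire functions $z\mapsto\int_0^c q\,u_1u_2\,dx$ and $z\mapsto\int_0^c q\,u_1c_2\,dx$ (in the variable $z=\sqrt\lambda$) have exponential type $2c$ while their known zeros have upper density only $(1-\varepsilon)/\pi$; the symmetric zero-density bound of Theorem~\ref{thm_levinson} then yields a contradiction only when $c<(1-\varepsilon)/2$ --- the factor-of-two loss you acknowledge. To reach $c<1-\varepsilon$ one must pass to the combinations $H_2\pm \mathrm{i}zH_1$, whose leading terms involve $1-e^{\pm2\mathrm{i}zx}$ and are bounded in one half-plane only, and then invoke a one-sided (Cartwright/indicator-function) zero-density theorem rather than the symmetric one, all while controlling the Volterra perturbation coming from transmutation representations of products of solutions of two \emph{distinct} potentials, and while using that such theorems give the zero density as a genuine \emph{limit} (this is what rescues the hypothesis \eqref{S_prop}, which is only a $\limsup$). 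That is essentially Horv\'ath's machinery and can be carried out, but it is a substantial argument, not a citation. The paper's proof sidesteps all of this by never forming products: since $V_1=V_2$ near $x=1$ and $\phi_k^{(1)},\phi_k^{(2)}$ share Cauchy data at $x=1$ for $k\in S$ (both vanish, with equal derivatives), backward ODE uniqueness gives $\psi^{(1)}(\cdot,\sqrt{\lambda_k})=\psi^{(2)}(\cdot,\sqrt{\lambda_k})$ and equality of derivatives at $x=1-\varepsilon_1$; the differences of the two solutions (and of their derivatives) at that point are entire of type $1-\varepsilon_1$ with \emph{undoubled} zero density at least $(1-\varepsilon)/\pi$, so Theorem~\ref{thm_levinson} forces them to vanish identically, giving equal Weyl functions at $x=1-\varepsilon_1$ and hence $V_1=V_2$ by Borg--Marchenko.
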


\begin{remark}
We emphasize that incorporating the normal derivatives of the eigenfunctions at both endpoints enables us to obtain a stronger result than that of \cite{Gesztesy1999InverseSA}. In particular, instead of requiring knowledge of the potential on at least half of the interval, it now suffices to know its values only within an $\varepsilon$-neighborhood of an endpoint.
\end{remark}

\begin{remark} Beyond its intrinsic spectral interest, Theorem~\ref{thm_spectral} yields powerful consequences in time-dependent inverse problems, particularly in imaging a medium with the additional presence of unknown sources and initial data. We next demonstrate its utility in determining both the initial condition and potential in wave-based inverse problems, a setting closely related to photoacoustic tomography, followed by a further application in the study of inverse problems for dynamical Schr\"odinger equations in the presence of unknown sources in the system.
\end{remark}
\subsection{Passive imaging of unknown media}
Inverse problems for time-dependent partial differential equations (PDEs) are concerned with the recovery of hidden properties of a system, such as coefficients, sources, or initial conditions, from limited observations of its evolution. These problems are foundational in applied mathematics, with wide-ranging applications from medical imaging to geophysics and materials science. Among the most challenging and practically relevant formulations are \emph{passive} inverse problems, where the observer has no control over the input signal and must infer internal characteristics of the medium solely from measurements taken at the boundary or within a localized observation region. A well known example of this type of inverse problem arises in the so-called photoacoustic tomography (PAT), a hybrid imaging modality that couples the high contrast of optical absorption with the resolution capabilities of ultrasound. The mathematical formulation of PAT is fundamentally an inverse problem for the wave equation, in which one seeks to reconstruct an unknown initial pressure distribution as well as an unknown wave speed or potential function from measurements of the resulting acoustic wave field on the boundary of the domain. Despite its origin in biomedical imaging, PAT has emerged as a rich source of inverse problems of both theoretical and applied interest.

In this work, we consider a one-dimensional model of PAT on a finite interval, where the acoustic wave propagation is governed by the standard wave equation with an initial pressure $f$ and with the potential $V$ representing the internal properties of the medium. We impose homogeneous Dirichlet boundary conditions, corresponding physically to an acoustically soft medium,
\begin{equation}\label{pf1}
	\begin{aligned}
		\begin{cases}
			\p_t^2 u-\p^2_xu+V(x)u=0 &\text{in } (0,T)\times(0,1),\\
			u(t,0)=u(t,1)=0 &\text{on } (0,T),\\
			u(0,x)=f(x), \quad \p_t u(0,x)=0 &\text{on } (0,1),
		\end{cases}
	\end{aligned}
\end{equation}
Given each $f\in H^1_0((0,1))$ the above problem admits a unique solution $u$ in the energy space
$$  C^1([0,T];L^2((0,1)) \cap C^0([0,T];H^1_0((0,1))).$$
The goal of the inverse problem is to simultaneously reconstruct unknown initial pressure $f$ and the potential $V$ from the boundary measurement 
$$  \p_x u(t,0) \quad \text{and}\quad \p_x u(t,1) \quad t\in (0,T).$$
We obtain the following two uniqueness results. Our first result only requires a condition on the initial data at the end points of the interval as well as some additional regularity on the potential function. The result is stated in a rigidity sense, meaning that we only impose the conditions on one pair of data and not the other. 
\begin{theorem}
	\label{thm_1}
	Let $T>2$. For $j=1,2$, let $V_j\in L^\infty((0,1))$ be real-valued, and let $f_j\in H^1_0((0,1)))$. Assume that $V_1\in C^4([0,1])$, $f_1\in H^2_0((0,1))\cap H^3((0,1))$ and that $|f_1''(0)|\neq |f_1''(1)|$ \footnote{Following the proof, this assumption can be changed to the alternative condition that $|f_1^{(2N)}(0)|\neq |f_1^{(2N)}(1)|$ for some $N\in \N$, provided that $V_1\in C^{2N+2}([0,1])$, $f_1\in H^{2N}_0((0,1))\cap H^{2N+1}((0,1))$.}. Assume also that $V_1=V_2$ in a neighborhood of  $x=1$. Let $u_j$ denote the solution of \eqref{pf1} with $f=f_j$ and $V=V_j$. If
	\begin{equation}\label{DN_eq_1}
		\p_x u_1(t,0)=\p_x u_2(t,0) \quad \text{and} \quad \p_x u_1(t,1)=\p_x u_2(t,1) \quad \forall\,t\in(0,T),
	\end{equation}
	then $f_1=f_2$ and $V_1=V_2$ on $(0,1)$.
\end{theorem}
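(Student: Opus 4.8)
The plan is to pass from the time-domain boundary measurements to spectral data for the two Schr\"odinger operators and then to invoke Theorem~\ref{thm_spectral}. Write $\psi_k^{(j)}=\phi_k^{(j)}/\|\phi_k^{(j)}\|_{L^2}$ for the $L^2$-normalized Dirichlet eigenfunctions and $\omega_k^{(j)}=\sqrt{\lambda_k^{(j)}}$; since $V_j\in L^\infty$ one has $\omega_k^{(j)}=k\pi+O(1/k)$ and only finitely many $\lambda_k^{(j)}$ are non-positive. Expanding the solution of \eqref{pf1} in this basis and using $\partial_t u_j(0,\cdot)=0$ gives, for the two endpoint traces,
\begin{equation*}
\partial_x u_j(t,0)=\sum_{k\geq1}\alpha_k^{(j)}\cos(\omega_k^{(j)}t),\qquad \partial_x u_j(t,1)=\sum_{k\geq1}\beta_k^{(j)}\cos(\omega_k^{(j)}t),
\end{equation*}
where $\alpha_k^{(j)}=\langle f_j,\psi_k^{(j)}\rangle\,\partial_x\psi_k^{(j)}(0)$ and $\beta_k^{(j)}=\langle f_j,\psi_k^{(j)}\rangle\,\partial_x\psi_k^{(j)}(1)$. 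Both traces are even functions of $t$, so the hypothesis \eqref{DN_eq_1} determines them on $(-T,T)$, an interval of length $2T>4$.

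First I would establish the matching of spectral data. Forming the differences of the two representations at each endpoint yields even almost-periodic functions, supported on the frequency set $\{\pm\omega_k^{(1)}\}\cup\{\pm\omega_k^{(2)}\}$, that vanish on $(-T,T)$. This combined frequency set has upper density $2/\pi$, so its completeness radius is $2\pi\cdot(2/\pi)=4$; since $2T>4$, a Beurling--Kahane-type uniqueness theorem for nonharmonic series (in the form of the spectral interpolation theorem of the appendix) forces each difference to vanish identically on $\mathbb{R}$. The key point is that the two spectra interlace with $\omega_k^{(1)}-\omega_k^{(2)}\to0$, so the frequencies are not uniformly separated and a classical Ingham inequality is unavailable; it is precisely the density-based (Kahane) uniqueness, together with the doubling of the observation interval by even reflection, that makes $T>2$ the sharp threshold. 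Identifying Bohr--Fourier coefficients then gives: whenever $\omega_k^{(1)}$ does not belong to the spectrum of $-\partial_x^2+V_2$ one has $\alpha_k^{(1)}=0$, and whenever $\omega_k^{(1)}=\omega_m^{(2)}$ one has $\alpha_k^{(1)}=\alpha_m^{(2)}$ and $\beta_k^{(1)}=\beta_m^{(2)}$.

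Next I would remove the first alternative using the non-degeneracy assumptions on $f_1$. Since a Dirichlet eigenfunction cannot have vanishing normal derivative at an endpoint, $\partial_x\psi_k^{(1)}(0)\neq0$, while integrating $\langle f_1,\psi_k^{(1)}\rangle$ by parts and using $f_1\in H^2_0((0,1))\cap H^3((0,1))$ produces leading asymptotics governed by $(-1)^kf_1''(1)-f_1''(0)$; the hypothesis $|f_1''(0)|\neq|f_1''(1)|$ guarantees this never vanishes, so $\alpha_k^{(1)}\neq0$ for all large $k$. Consequently every large $\omega_k^{(1)}$ lies in the spectrum of $-\partial_x^2+V_2$, and the asymptotics $\lambda_k^{(j)}\sim k^2\pi^2$ force the matching to be index-preserving: there is $k_\ast$ with $\lambda_k^{(1)}=\lambda_k^{(2)}$, $\alpha_k^{(1)}=\alpha_k^{(2)}$ and $\beta_k^{(1)}=\beta_k^{(2)}$ for all $k\geq k_\ast$. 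Dividing the last two equalities and using $\partial_x\phi_k^{(j)}(1)=\partial_x\psi_k^{(j)}(1)/\partial_x\psi_k^{(j)}(0)=\beta_k^{(j)}/\alpha_k^{(j)}$ gives $\partial_x\phi_k^{(1)}(1)=\partial_x\phi_k^{(2)}(1)$ for $k\geq k_\ast$. Thus \eqref{spec_hypo} holds for the cofinite set $S=\{k\geq k_\ast\}$, which has density $1\geq1-\varepsilon$; choosing $\varepsilon$ so that $V_1=V_2$ on $(1-\varepsilon,1)$ secures $\mathrm{supp}\,(V_1-V_2)\subset[0,1-\varepsilon)$, and Theorem~\ref{thm_spectral} yields $V_1=V_2$ on $(0,1)$.

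Finally, with $V_1=V_2=:V$ the two operators share the same simple Dirichlet spectrum $\{\lambda_k\}$ and the same normalized eigenfunctions $\{\psi_k\}$. The vanishing of the even almost-periodic difference at $x=0$ then reads $\sum_k(\alpha_k^{(1)}-\alpha_k^{(2)})\cos(\sqrt{\lambda_k}t)\equiv0$ with now distinct frequencies, so $\alpha_k^{(1)}=\alpha_k^{(2)}$ for every $k$; since $\partial_x\psi_k(0)\neq0$ this gives $\langle f_1,\psi_k\rangle=\langle f_2,\psi_k\rangle$ for all $k$, and completeness of $\{\psi_k\}$ in $L^2((0,1))$ forces $f_1=f_2$. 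The main obstacle throughout is the second paragraph: because the two spectra are asymptotically confluent the exponential system is not a Riesz sequence, so the uniqueness step cannot rely on Ingham's inequality and must instead be carried out through the density-based interpolation theorem, which is what makes the critical time $T=2$ attainable.
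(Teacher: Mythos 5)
Your overall strategy is the same as the paper's: expand the endpoint traces as cosine series, use the $|f_1''(0)|\neq|f_1''(1)|$ integration-by-parts asymptotics to get $\alpha_k^{(1)}\neq 0$ for large $k$ (this matches Lemma~\ref{lem_zero_density_wave}), deduce index-preserving matching of eigenvalues and of $\p_x\phi_k(1)$ via the quotient $\beta_k/\alpha_k$, and invoke Theorem~\ref{thm_spectral}. However, your central analytic step contains a genuine gap. You pass from the vanishing of the trace differences on $(-T,T)$ to "identifying Bohr--Fourier coefficients" by applying a density-based uniqueness/interpolation theorem (Kahane, "in the form of the spectral interpolation theorem of the appendix") to the \emph{combined} frequency set $\{\pm\omega_k^{(1)}\}\cup\{\pm\omega_k^{(2)}\}$. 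But Kahane's theorem (Lemma~\ref{lem_Kahane}) and the appendix result both require the frequency set to be \emph{uniformly discrete}, and the combined set is not: $\omega_k^{(1)}-\omega_k^{(2)}=O(1/k)\to 0$, so unless the two spectra eventually coincide --- which is precisely what is to be proved --- the union has no uniform separation. You flag this confluence yourself, but then assert that "density-based (Kahane) uniqueness" is immune to it; that is backwards. What Kahane relaxes relative to Ingham/Riesz-sequence theory is the density requirement, not the separation requirement: no interpolation theorem can hold at non-separated nodes, since values of a function of fixed exponential type at two nearby points cannot be prescribed independently with bounded norm. As written, the coefficient identification --- the step on which everything else rests --- is unsupported.

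This obstruction is exactly what the paper's Lemma~\ref{lem_completeness} is built to circumvent, and it is the heart of the proof of Theorem~\ref{thm_1}. There, Kahane \cite{Kahane1957SurLF} together with \cite[Theorem 7]{Young1980AnIT} is applied to \emph{each} spectrum separately (each $\Gamma_j$ \emph{is} uniformly discrete with $D^+(\Gamma_j)=1/\pi<T/(2\pi)$), producing for each $j$ a nonzero annihilator supported in an interval of length $T/2$; the two annihilators are then \emph{convolved}, so the resulting function is supported in $(-T,T)$ and its Fourier transform, being a product, vanishes on the union of the two spectra --- no separation of the union is ever invoked. Finally, to turn this annihilator into a biorthogonal functional (equal to $1$ at the target eigenvalue $\lambda_m^{(1)}$ and $0$ on all others), the paper divides the Fourier transform by $(z^2-\lambda_m^{(1)})^s$ and uses the Paley--Wiener theorem to return to a test function in $L^2((-T,T))$, whose even restriction lives on the actual measurement window $(0,T)$. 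Your proposal is missing both the convolution step and this division/Paley--Wiener step; without them (or a substitute such as a Beurling--Malliavin completeness-radius argument, which tolerates non-separated sets, followed by the same division trick) the argument does not go through. The remaining parts of your proof --- including the final recovery of $f_1=f_2$, where the paper uses unique continuation while your spectral route would also work once biorthogonality for a single separated spectrum is in hand --- are consistent with the paper.
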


Our second result addresses the inverse problem under a support condition on one of the initial data, requiring it to be localized near one endpoint. This assumption is physically motivated; for instance, in seismic imaging, earthquakes typically originate near the Earth's surface.
\begin{theorem}
	\label{thm_1_support}
	Let $T>2$. For $j=1,2$, let $V_j\in L^\infty((0,1))$ be real-valued, and let $f_j\in H^1_0((0,1)))$. Assume that $f_1$ is not identical to zero on $(0,1)$ and that
	\begin{equation}\label{f_1_V_1_cond}
		\mathrm{supp}\, f_1 \subset [0,\varepsilon] \quad \text{and}\quad \mathrm{supp}\,(V_1-V_2) \subset [0,1-\varepsilon),
	\end{equation}
	for some $\varepsilon\in (0,1)$. Let $u_j$ denote the solution of \eqref{pf1} with $f=f_j$ and $V=V_j$. If
	\begin{equation}\label{DN_eq_supp}
		\p_x u_1(t,0)=\p_x u_2(t,0) \quad \text{and} \quad \p_x u_1(t,1)=\p_x u_2(t,1) \quad \forall\,t\in(0,T),
	\end{equation}
	then $f_1=f_2$ and $V_1=V_2$ on $(0,1)$.
\end{theorem}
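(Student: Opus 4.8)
The plan is to pass from the time-domain boundary measurements to spectral data for the two operators $-\frac{d^2}{dx^2}+V_j$, to match that data by means of the interpolation machinery of the appendix, and then to invoke Theorem~\ref{thm_spectral}. For each $j$ the solution of \eqref{pf1} separates in the Dirichlet eigenbasis $\{\phi_k^{(j)}\}$ normalized as in \eqref{initial_data}: setting $\nu_k^{(j)}=\sqrt{\lambda_k^{(j)}}$ and $d_k^{(j)}=\langle f_j,\phi_k^{(j)}\rangle/\norm{\phi_k^{(j)}}_{L^2}^2$ (with $\cos$ replaced by $\cosh$ for the finitely many negative eigenvalues), one has $u_j(t,x)=\sum_k d_k^{(j)}\cos(\nu_k^{(j)}t)\,\phi_k^{(j)}(x)$. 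Differentiating in $x$ and using $\p_x\phi_k^{(j)}(0)=1$ converts the two measured traces into the cosine series $\p_x u_j(t,0)=\sum_k d_k^{(j)}\cos(\nu_k^{(j)}t)$ and $\p_x u_j(t,1)=\sum_k d_k^{(j)}\,\p_x\phi_k^{(j)}(1)\cos(\nu_k^{(j)}t)$.

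The first, and hardest, step is to match the spectra from the $x=0$ data. Hypothesis \eqref{DN_eq_supp} at $x=0$ asserts that the two cosine series agree on $(0,T)$ with $T>2$. Since $\lambda_k^{(j)}\sim k^2\pi^2$, the frequencies $\nu_k^{(j)}\sim k\pi$ have density $1/\pi$, so $T=2$ is the critical length, and the content of the spectral interpolation theorem of the appendix (resting on Kahane's theorem) is that on an interval of length $T>2$ such a trace determines the full sequence $\{(\nu_k^{(j)},d_k^{(j)})\}_k$. The genuine obstacle is that the two expansions carry a priori different frequencies whose union has density $2/\pi$, a regime in which $(0,T)$ with $T$ near $2$ is not an interval of uniqueness for exponential sums; the structural fact that rescues the argument is $\nu_k^{(1)}-\nu_k^{(2)}\to 0$, which together with the interpolation theorem forces $\nu_k^{(1)}=\nu_k^{(2)}=:\nu_k$ (hence $\lambda_k^{(1)}=\lambda_k^{(2)}$) and $d_k^{(1)}=d_k^{(2)}=:d_k$ for every $k$. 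I expect the main work of the proof to reside precisely here.

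With common frequencies $\nu_k$ and common coefficients $d_k$, the $x=1$ measurement in \eqref{DN_eq_supp} gives $\sum_k d_k\big[\p_x\phi_k^{(1)}(1)-\p_x\phi_k^{(2)}(1)\big]\cos(\nu_k t)=0$ on $(0,T)$; now the frequencies form a single density-$1/\pi$ sequence, so the same uniqueness on $(0,T)$, $T>2$, yields $d_k\big[\p_x\phi_k^{(1)}(1)-\p_x\phi_k^{(2)}(1)\big]=0$ for all $k$. Put $S=\{k: d_k\neq 0\}$, so that $\p_x\phi_k^{(1)}(1)=\p_x\phi_k^{(2)}(1)$ for $k\in S$. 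To see that $S$ obeys \eqref{S_prop}, use the transformation-operator representation $\phi_k^{(1)}(x)=\nu_k^{-1}\sin(\nu_k x)+\nu_k^{-1}\int_0^x K(x,s)\sin(\nu_k s)\,ds$, so that the entire function $z\mapsto \int_0^\varepsilon f_1(x)\big[\sin(zx)+\int_0^x K(x,s)\sin(zs)\,ds\big]\,dx$, whose value at $z=\nu_k$ is proportional to $d_k$, has exponential type at most $\varepsilon$ by the support condition $\supp f_1\subset[0,\varepsilon]$, and is square-integrable on $\R$; it is not identically zero since $f_1\not\equiv 0$. As a nonzero entire function of exponential type $\varepsilon$ bounded on $\R$ has real zeros of density at most $\varepsilon/\pi$, while $\{\nu_k\}$ has density $1/\pi$, the set $\{k: d_k=0\}$ has density at most $\varepsilon$, whence $S$ has lower density at least $1-\varepsilon$.

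Finally, we have produced a set $S$ of density $\geq 1-\varepsilon$ with $\lambda_k^{(1)}=\lambda_k^{(2)}$ and $\p_x\phi_k^{(1)}(1)=\p_x\phi_k^{(2)}(1)$ for all $k\in S$, together with the hypothesis $\supp(V_1-V_2)\subset[0,1-\varepsilon)$. Theorem~\ref{thm_spectral} then gives $V_1=V_2$ on $(0,1)$. Since the potentials coincide, so do the eigenfunctions, $\phi_k^{(1)}=\phi_k^{(2)}$ for all $k$, and because $d_k^{(1)}=d_k^{(2)}$ from the matching at $x=0$, we conclude $f_1=\sum_k d_k^{(1)}\phi_k^{(1)}=\sum_k d_k^{(2)}\phi_k^{(2)}=f_2$, completing the proof.
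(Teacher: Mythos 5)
Your overall architecture --- expanding the boundary traces in the Dirichlet eigenbasis, matching spectral data via Kahane-type interpolation, bounding the density of the exceptional index set through a Paley--Wiener argument based on $\mathrm{supp}\,f_1\subset[0,\varepsilon]$, and finally invoking Theorem~\ref{thm_spectral} --- is the same as the paper's. In particular, your third paragraph (the entire function $z\mapsto\int_0^\varepsilon f_1(x)\bigl[\sin(zx)+\int_0^xK(x,s)\sin(zs)\,ds\bigr]dx$ has exponential type $\varepsilon$, is bounded on $\R$ and not identically zero, hence its zeros along $\{\nu_k\}$ leave a set of density at most $\varepsilon$) is a correct, self-contained version of what the paper obtains by citing \cite[Lemma 3.4]{Feiz25}, and your application of Theorem~\ref{thm_spectral} at the end is exactly the paper's.

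However, your second step --- the one you yourself flag as ``the main work'' and do not actually carry out --- asserts something false. From the $x=0$ data alone you claim $\nu_k^{(1)}=\nu_k^{(2)}$ and $d_k^{(1)}=d_k^{(2)}$ for \emph{every} $k$. An eigenvalue $\lambda_m^{(1)}$ whose coefficient $d_m^{(1)}$ vanishes is completely invisible in the trace $\p_xu_1(t,0)=\sum_k d_k^{(1)}\cos(\nu_k^{(1)}t)$, so no interpolation theorem can recover it: by one-dimensional inverse spectral theory one may move such an eigenvalue of $V_1$ while keeping the rest of the spectral data, producing $V_2$ and $f_2$ with identical $x=0$ traces but $\lambda_m^{(1)}\neq\lambda_m^{(2)}$. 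The asymptotic fact $\nu_k^{(1)}-\nu_k^{(2)}\to0$ does not ``rescue'' this; it only serves, \emph{after} matching has been established on some index set, to show that the matching bijection is eventually the identity. The correct and provable statement, which is what the paper establishes, is matching only on the set where $d_m^{(1)}\neq0$: for each such $m$, Lemma~\ref{lem_completeness} (whose proof is where $T>2$ and the convolution trick handle the density-$2/\pi$ union of the two spectra) produces $\eta_m\in L^2((0,T))$ annihilating $\cos(\sqrt{z}\,t)$ for every $z$ in the union of the two spectra except $\lambda_m^{(1)}$, and pairing \eqref{DN_eq_supp} at $x=0$ against $\eta_m$ forces $d_m^{(1)}=0$ if $\lambda_m^{(1)}$ is not an eigenvalue of $V_2$ --- a contradiction. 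Your later steps in fact only use the matching on $S=\{k:d_k\neq0\}$, so the architecture survives this correction, but as written the proof both asserts an unprovable claim and supplies no argument for it. The same defect infects your last line: $f_1=f_2$ cannot be deduced from ``$d_k^{(1)}=d_k^{(2)}$ for all $k$,'' since that was never established for the indices with vanishing coefficients; once $V_1=V_2$ is known one should either rerun the completeness argument with the now common spectrum, or, as the paper does, conclude by unique continuation for the wave equation using $T>2$.
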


The existing literature on passive inverse problems remains comparatively limited. Classical contributions by Pierce~\cite{Pierce1979UniqueIO}, Suzuki~\cite{Suzuki1983UniquenessAN, Suzuki1986InversePF}, and Murayama~\cite{Murayama1981TheGT} address parabolic models under strong spectral assumptions on the initial data $f$, with analogous considerations in the Schrödinger setting by Avdonin et al.~\cite{Avdonin2011ReconstructingTP, Avdonin2010}. In all these works, the spectral condition imposed on the initial data  $f_j$, $j=1,2$, requires that
$$ \left(f_j,\phi_k^{(j)}\right)_{L^2((0,1))} \neq 0 \quad \forall\,k\in \N.$$
which constitutes a non-degeneracy assumption. This condition is both necessary and sufficient for uniqueness in the one-dimensional problem. More recently, the authors of~\cite{FeiKrup25} established analogous uniqueness results for multidimensional inverse problems with passive measurements. Their contribution represents a genuine breakthrough: whereas the one-dimensional case rests essentially on tools from complex analysis, the multidimensional theory is developed through entirely different techniques that open a new methodological direction.

Returning to the nondegeneracy condition above in the one dimensional setup, we remark that such hypotheses, while mathematically generic, are generally unverifiable and physically opaque. In contrast, our result on the above one dimensional inverse problem only relies on concrete verifiable physical conditions on the initial data, avoiding reliance on inaccessible spectral information. Our approach again solely relies on methods in complex analysis which signals that such results may not be possible to obtain in multidimensional setups.

The work closest in spirit to ours is the recent contribution of the first author~\cite{Feiz25}, which studies an analogous passive inverse problem in the context of 1-D parabolic equations but with measurement of the normal derivative of the solution at only one end point. The author shows that under an additional assumption on the size of the support of the unknown initial data, the problem may be reduced to the question of reconstructing a potential from a fraction of the Dirichlet spectrum. That reduction relies on a Paley--Wiener correspondence between the spatial support of initial data and vanishing conditions on spectral components. However, uniqueness in~\cite{Feiz25} hinges on knowing the potential on more than half the domain, a limitation arising from the exclusive use of eigenvalue data. The seminal result of Gesztesy and Simon~\cite{Gesztesy1999InverseSA} illustrates the necessity of this constraint in the absence of eigenfunction information. 

A key difference between the present work and \cite{Feiz25} is the incorporation of boundary data at both endpoints, which effectively encodes partial information on the eigenfunctions. This enables us to apply our refined inverse spectral result (Theorem~\ref{thm_spectral}) to achieve uniqueness without prior knowledge of $V$ on a large portion of the domain. Another novel aspect of our result lies in the finite time measurement intervals appearing in our theorems. This improvement is made possible by introducing a new technique for analyzing the solution at the endpoints in the form of a time series. Our approach leverages a Paley--Wiener interpolation theorem due to Kahane~\cite{Kahane1957SurLF, Kahane1962}, which enables us to bypass the need for infinite-time data or analyticity assumptions previously required in~\cite{Feiz25}. Let us also point out that the condition $T>2=2\,\textrm{Diam}((0,1))$ is a classical assumption for inverse coefficient determination problems for the wave equation and it is connected with the finite speed of propagation, see e.g. \cite{KMO}.

We close this section by mentioning that our results also resonate with contemporary advances in passive imaging using stochastic sources. Helin et al.~\cite{Helin2013InversePF, Helin2016CorrelationBP}, and more recently Bl\aa sten et al.~\cite{Bla25}, demonstrate that certain random wavefields carry sufficient information for inverse recovery. However, our analysis is fully deterministic, and we establish that even a single, uncontrolled excitation, when properly interpreted through spectral methods, suffices to uniquely determine both the medium and the initial data.

\subsection{Active imaging in the presence of unknown sources}
Inverse problems involving active measurements concern the determination of internal features of an unknown medium by applying a prescribed perturbation, such as an initial displacement or boundary excitation, and subsequently recording the system’s response, often at the boundary or within a restricted subset of the domain. When the input signal is sufficiently rich or varied, one may expect the associated input-output data to encode detailed information about underlying properties, such as variable coefficients or embedded structures.

In many practical settings, however, precise control over the system is limited or altogether lacking. This may be due to inherent background dynamics, environmental interference, or technical constraints that prevent the full specification or isolation of the initial state. For example, in seismological applications, while it is possible to introduce a controlled impulse at the surface of the Earth, the resulting measurements are typically superimposed with ambient and persistent fluctuations, both natural and anthropogenic, which are neither localized in space nor confined in time. Consequently, the data obtained from such an experiment reflect the combined effect of the intended perturbation and an ensemble of unknown, temporally extended, and spatially distributed influences. This situation motivates a fundamental question:

\vspace{1ex}
\begin{itemize}
	\item[\bf(Q3)] \emph{Can one determine the internal structure of a medium, such as its potential, wave speed, or geometry, from a single active boundary measurement, even in the presence of unknown and possibly persistent internal sources?}
\end{itemize}
\vspace{1ex}

The problem stands at the intersection of inverse scattering theory, microlocal analysis, and the spectral theory of partial differential equations. It challenges the classical paradigm in which identifiability is predicated on access to a full set of controllable inputs. Instead, it calls for techniques capable of extracting invariant information from data contaminated by unknown and uncontrollable contributions.

In this paper, we introduce a rigorous framework to address this question in the setting of one-dimensional Schr\"odinger equations. Specifically, we consider the recovery of an internal potential on the interval $(0,1)$, based on a single boundary observation of the solution, under the assumption that the system evolves from arbitrary, unknown initial data. To model the active measurement, we introduce a transient, spacetime localized perturbation near the boundary point $x=0$ and near $t=0$. This is encoded by a term of the form $\mathbbm{1}_\delta(t)\,\mathbbm{1}_\delta(x)$ with $\delta\in (0,1)$, where
\begin{equation}\label{1_epsilon}
	\mathbbm{1}_\delta(t)=\begin{cases}
		1 &\text{if } t\in[0,\delta],\\
		0 &\text{otherwise}.
	\end{cases}
\end{equation}
This construction serves as a mathematically idealized model of a localized probing signal, capable of eliciting a detectable response from the medium without suppressing or isolating the unknown components of the initial state. To state our result, we consider the initial boundary value problem
\begin{equation}\label{pf2}
	\begin{aligned}
		\begin{cases}
			\mathrm{i}\,\p_t u-\p_x^2 u+V(x)u=F(x)+\mathbbm{1}_{\delta}(t)\,\mathbbm{1}_{\delta}(x) &\text{on } (0,T)\times(0,1),\\
			u(t,0)=u(t,1)=0 &\text{on } (0,T),\\
			u(0,x)=f(x) &\text{on } (0,1).
		\end{cases}
	\end{aligned}
\end{equation}
Here, the initial data $f$, the source $F$ and the potential $V\in L^\infty((0,1))$ are all assumed to be a priori unknown. We remark that the source $F$ may be physically interpreted as modelling some consistent time invariant background noise in the system. Given each $f\in H^1_0((0,1))$ and $F\in L^2((0,1))$, the above problem admits a unique solution in the energy space
$$
C^0([0,T];H^1_0((0,1))).
$$
Moreover, $\p_x u|_{x=0,1} \in L^2((0,T))$. The inverse problem is concerned with reconstructing the potential $V$ as well as the unknown initial data $f$ and the source $F$ given the single short range boundary measurement 
$$  \p_x u(t,0) \quad \text{and}\quad \p_x u(t,1) \quad t\in (0,\delta),$$
where $0<\delta<T$. We prove the following theorem.
\begin{theorem}
	\label{thm_2}
	Let $T>0$. For $j=1,2$, let $V_j\in L^\infty((0,1))$ be real-valued, let $f_j\in H^1_0((0,1))$ and let $F_j \in L^2((0,1))$. Assume that $V_1\in C^4([0,1])$, $f_1\in H^3_0((0,1))$ and that $F_1\in H^1_0((0,1))$. Assume also that $V_1=V_2$ in a neighbourhood of $x=1$. Let $\delta\in (0,1)$ be sufficiently small \footnote{Assuming that $\mathrm{supp}\,(V_1-V_2)\subset [0,1-\varepsilon)$ for some $\varepsilon>0$, it suffices to take $0<\delta<\min\{1,2\varepsilon\}$.}. Let $u_j$ denote the solution to \eqref{pf2} with $V=V_j$, $f=f_j$ and $F=F_j$. If
	\begin{equation}\label{DN_eq_2}
		\p_x u_1(t,0)=\p_x u_2(t,0) \quad \text{and} \quad \p_x u_1(t,1)=\p_x u_2(t,1) \quad \forall\,t\in(0,\delta),
	\end{equation}
	then $V_1=V_2$ on $(0,1)$\footnote{Our proof also shows that the initial data and the source could be recovered up to the natural gauge for the problem, namely that $F_1-F_2=(-\p^2_x+V)(f_1-f_2)$, where $V:=V_1=V_2$.}.
\end{theorem}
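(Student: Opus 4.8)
The plan is to reduce the time-dependent inverse problem for the Schrödinger equation \eqref{pf2} to the spectral recovery result in Theorem~\ref{thm_spectral}. The difference $w := u_1 - u_2$ solves $\mathrm{i}\,\p_t w - \p_x^2 w + V_2 w = (V_2 - V_1)u_1 + (F_1 - F_2)$ with $w(0,\cdot) = f_1 - f_2$, and \eqref{DN_eq_2} says $\p_x w(t,0) = \p_x w(t,1) = 0$ for $t\in(0,\delta)$. The first step is to expand the solutions in the Dirichlet eigenbasis $\{\phi_k^{(j)}\}$ of $-\p_x^2 + V_j$. For $t<\delta$ the forcing on the right-hand side of \eqref{pf2} equals $F + \mathbbm{1}_\delta(x)$ (since $\mathbbm{1}_\delta(t)=1$ there), and the boundary trace $\p_x u_j(t,0)$ can be written as a Dirichlet series in $e^{-\mathrm{i}\lambda_k^{(j)} t}$ with coefficients built from the eigenfunction normal derivatives $\p_x\phi_k^{(j)}(0)=1$ together with the spectral coefficients of $f_j$, $F_j$, and the probe $\mathbbm{1}_\delta$. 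Likewise at $x=1$ the coefficients involve $\p_x\phi_k^{(j)}(1)$.

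The crucial analytic step is to convert the equality of the two boundary traces on the short interval $(0,\delta)$ into equality of the corresponding spectral data indexed over a subset $S\subset\N$ of positive density. This is where Kahane's interpolation theorem enters: the family $\{e^{-\mathrm{i}\lambda_k t}\}$, with $\lambda_k\sim k^2\pi^2$, has a counting function whose density governs the minimal length of a time interval on which the traces determine the individual Dirichlet coefficients. Because the eigenvalues grow quadratically, the exponentials $e^{-\mathrm{i}\lambda_k t}$ are nonharmonic with large gaps, and Kahane's theorem guarantees that observation on an interval of length $\delta$ suffices to recover all but an $\varepsilon$-fraction of the coefficients, precisely when $\delta$ is chosen as in the footnote, $0<\delta<\min\{1,2\varepsilon\}$. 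I would first establish, via this interpolation/completeness argument, that matching traces on $(0,\delta)$ forces, for every $k$ in a set $S$ of density $\geq 1-\varepsilon$, both $\lambda_k^{(1)}=\lambda_k^{(2)}$ and equality of the full spectral coefficients appearing in the series at $x=0$ and at $x=1$.

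Next I would isolate from these coefficient identities the data required by Theorem~\ref{thm_spectral}, namely $\p_x\phi_k^{(1)}(1)=\p_x\phi_k^{(2)}(1)$ for $k\in S$. The normalization $\p_x\phi_k^{(j)}(0)=1$ fixes the $x=0$ normal derivatives to be identical automatically, so the content lies at the endpoint $x=1$; the probe term $\mathbbm{1}_\delta(x)\mathbbm{1}_\delta(t)$, being localized near $x=0$, contributes a known and identical quantity to both systems and can be cancelled, while its presence ensures the coefficients are nondegenerate enough to extract the eigenfunction data rather than being swamped by the unknown $f_j,F_j$. With $\mathrm{supp}(V_1-V_2)\subset[0,1-\varepsilon)$ assumed and the hypotheses \eqref{spec_hypo} verified on $S$, Theorem~\ref{thm_spectral} yields $V_1=V_2$ on $(0,1)$ directly. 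The recovery of the initial data and source up to the stated gauge follows by substituting $V_1=V_2$ back into the equation for $w$ and reading off that $w$ solves a homogeneous problem with vanishing boundary traces, forcing $(F_1-F_2)=(-\p_x^2+V)(f_1-f_2)$.

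The main obstacle I anticipate is the careful separation, within a \emph{single} boundary trace, of the three unknown contributions (initial data $f$, static source $F$, and the eigenfunction-derivative data) so that the interpolation argument cleanly isolates the spectral quantities $\p_x\phi_k^{(j)}(1)$ needed to feed Theorem~\ref{thm_spectral}. Unlike a passive problem, here the static source $F$ produces a secular/non-oscillatory component (its contribution is not of the pure form $e^{-\mathrm{i}\lambda_k t}$ but involves $\lambda_k^{-1}(1-e^{-\mathrm{i}\lambda_k t})$), so I must track how $F$ enters the Dirichlet series and confirm that its presence does not obstruct the recovery of the $\lambda_k$ and eigenfunction traces on the density-$(1-\varepsilon)$ index set. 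Verifying the quantitative hypotheses of Kahane's theorem for the nonharmonic system $\{e^{-\mathrm{i}\lambda_k t}\}$ on the precise interval $(0,\delta)$, with $\delta$ tied to $\varepsilon$ through the eigenvalue gap structure, is the technical heart of the argument.
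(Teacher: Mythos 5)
Your overall architecture---eigenfunction expansion of the boundary traces, Kahane-type biorthogonal test functions on $(0,\delta)$, reduction to Theorem~\ref{thm_spectral}, and the gauge identity $F_1-F_2=(-\p_x^2+V)(f_1-f_2)$---matches the paper's proof. However, there is a genuine gap: you misattribute the source of the $\varepsilon$-fraction loss and of the constraint $0<\delta<\min\{1,2\varepsilon\}$. For the Schr\"odinger equation the exponents in the trace series are the eigenvalues themselves, $\lambda_k\sim k^2\pi^2$, not their square roots; hence the frequency set $\{\pm\lambda_k^{(j)}\}_{k\in\N}\cup\{0\}$ has upper uniform density \emph{zero}, and Kahane's theorem produces, for \emph{every} $\delta>0$, biorthogonal functions $\eta_m\in L^2((0,\delta))$ singling out each individual index (this is exactly Lemma~\ref{lem_completeness_schrodinger}). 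No fraction of coefficients is lost at the interpolation step, and no relation between $\delta$ and $\varepsilon$ can come out of it; the step you call ``the technical heart'' is, in this setting, automatic.

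Where the density loss actually occurs---and what is missing from your outline---is the non-degeneracy of the coefficients themselves. Testing the trace equality \eqref{DN_eq_2} at $x=0$ against $\eta_m$ yields information at index $m$ only if
\[
a_m^{(1)}\;\propto\;-\int_0^\delta \phi_m^{(1)}(t)\,dt-\int_0^1 F_1(t)\,\phi_m^{(1)}(t)\,dt+\lambda_m^{(1)}\int_0^1 f_1(t)\,\phi_m^{(1)}(t)\,dt
\]
is nonzero (and $\lambda_m^{(1)}\neq 0$); the probe's spectral coefficients cannot simply be ``cancelled'' between the two systems, since the eigenfunctions differ. The paper's Lemma~\ref{lele} is the real heart: using $V_1\in C^4$, $f_1\in H^3_0$, $F_1\in H^1_0$, it shows the probe contribution $\int_0^\delta\phi_k^{(1)}\,dt\approx \frac{2}{k^2\pi^2}\sin^2\bigl(\tfrac{k\pi\delta}{2}\bigr)$ dominates the $f_1,F_1$ contributions, which are $o(k^{-2})$, so the exceptional set $P$ where the coefficient degenerates is contained, up to finitely many indices, in the set where $\bigl|\sin\bigl(\tfrac{k\pi\delta}{2}\bigr)\bigr|$ is small; an explicit computation shows that set has upper density at most $\delta/2$. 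This is precisely where $\delta<2\varepsilon$ enters: it guarantees $D^+(P)\leq\delta/2<\varepsilon$, so that $S=\N\setminus P$ has density at least $1-\varepsilon$, as \eqref{S_prop} requires. Relatedly, the secular terms you flag (the $\lambda_k^{-1}(1-e^{-\mathrm{i}\lambda_k t})$ contributions of the static source) are not a mere bookkeeping nuisance: the paper kills them by building the extra condition $\int_0^\delta\eta_m(t)\,dt=0$ into the biorthogonal family. Without the probe-dominance density lemma and this zero-mean condition, your argument as outlined cannot be completed, since the Kahane step alone gives no control over which coefficients are actually observable.
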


Most of the previous results on this problem are in multidimensional settings and assume that no other unknown sources or unknown phenomena are present in the system and focus on using a single active measurement to reconstruct the coefficients of the PDE. We mention for example the foundational work of Bukhgeim and Klibanov, who introduced Carleman-based techniques to establish global uniqueness from single measurements~\cite{BukKli81} under a positivity type assumption for the initial data. Subsequent developments have demonstrated that, under suitable geometric or structural assumptions, a single specially designed, often highly singular, measurement can suffice for uniqueness in a wide array of multidimensional settings. For hyperbolic equations, such results include the determination of a wave speed or time-dependent coefficient from one boundary measurement via Carleman estimates and microlocal analysis~\cite{Stefanov2011RecoveryOA,Bellassoued2008DeterminationOA,Feizmohammadi2020GlobalRO}. In parabolic and diffusion-type problems, single measurement identification has been achieved for convection terms or simultaneously for multiple coefficients~\cite{Cheng2002IdentificationOC,Kian2022SimultaneousDO}. Extensions to fractional evolution equations have revealed that even the order of differentiation can be uniquely recovered~\cite{Kian2020TheUO}. In 1-D or complex coefficient settings, refined techniques ensure uniqueness despite limited data~\cite{Rakesh2001AOI}. 

\begin{remark}
	In higher dimensions, particularly in the context of wave or Schr\"odinger equations, a common strategy involves employing a dense family of highly singular sources, which are then carefully combined into a single source. This construction allows for the extraction of detailed information as the resulting singularities propagate through the medium. The multidimensional propagation of singularities generates a rich dataset, reflecting the intricate structure of the underlying dynamics. We stress, however, that this phenomenon is fundamentally absent in one dimension: methods relying solely on the propagation of singularities provide, at best, severely limited information about the coefficients.
\end{remark}

\section{Proof of Theorem~\ref{thm_spectral}}
\label{sec_thm_spectral}
We will assume throughout this section that the hypotheses of Theorem~\ref{thm_spectral} is satisfied. For each $j=1,2,$ and each $z\in \C$, let us define $\psi^{(j)}(\cdot,z)$ as the unique solution of the equation
\begin{equation}\label{Psi_def_z}
		\left(-\frac{d^2}{dx^2} + V_j(x)\right)\psi^{(j)}(x,z)= z^2\, \psi^{(j)}(x,z) \qquad \text{for all $x\in (0,1)$ },
	\end{equation}
	subject to the boundary conditions
	\begin{equation}\label{psi_initial_data}
	\psi^{(j)}(0,z)=0 \qquad \text{and}\qquad  \p_x\psi^{(j)}(0,z)=1.\end{equation}
It is well known that $\psi$ depends analytically on $z\in \C$ and that there exists a constant $C>0$ depending on $\max\{\|V_1\|_{L^\infty((0,1))}, \|V_2\|_{L^\infty((0,1))}\}$ such that the following estimates hold for each $j=1,2,$ uniformly on $[0,1]\times \C$,
\begin{equation}
	\label{ptwise_estimate_1}
	\left|\psi^{(j)}(x,z)-\frac{\sin (zx)}{z}\right| \leq  \frac{C}{1+|z|^2}\,e^{|\mathrm{Im}\,z|\,x} \quad \forall\, x\in [0,1] \quad \forall\, z\in \C,
\end{equation}
and
\begin{equation}
	\label{ptwise_estimate_2}
	\left|\p_x\psi^{(j)}(x,z)-\cos(zx)\right| \leq  \frac{C}{1+|z|}\,e^{|\mathrm{Im}\,z|\,x}\quad \forall\, x\in [0,1] \quad \forall\, z\in \C.
\end{equation}
Let us also observe that by the hypotheses of the theorem, there exists $\varepsilon_1\in (\varepsilon,1)$ such that:
\begin{equation}
	\label{psi_hypo}
	\psi^{(1)}(x,\sqrt{\lambda_k^{(1)}}) = 	\psi^{(2)}(x,\sqrt{\lambda_k^{(1)}}) \quad \forall\, x\in [1-\varepsilon_1,1]\quad \text{and}\quad \forall\, k\in S,
\end{equation}
and also that 
\begin{equation}
	\label{psi_hypo_1}
	\p_x\psi^{(1)}(x,\sqrt{\lambda_k^{(1)}}) = \p_x	\psi^{(2)}(x,\sqrt{\lambda_k^{(1)}}) \quad \forall\, x\in [1-\varepsilon_1,1]\quad \text{and}\quad \forall\, k\in S,
\end{equation}
Let us now recall a deep result of Levinson \cite[Chapter II, Theorem VIII, page 13]{Levinson1940} regarding the distribution of zeros of entire functions under certain growth estimates (see also \cite[Chapter 4, page 173]{Levin64}). Before writing the statement of the theorem of Levinson, we define the function $\log^+:\R \to [0,\infty)$ via
$$
\log^+x = \begin{cases}
	\log x \quad \text{if $x>1$}\\
	0		\quad \text{otherwise}.
\end{cases}
$$
\begin{theorem}[Chapter II, Theorem VIII, \cite{Levinson1940}]
	\label{thm_levinson}
	Let $F(z)$ be an entire function that is not identical to zero. Assume that 
	\begin{equation}\label{logarith_growth}
		\int_\R 	\frac{\log^+|F(x)|}{1+x^2}\,dx<\infty,
	\end{equation}
	and that
	\begin{equation}\label{lim_sup_cond}
		\limsup_{r\to \infty}\frac{\log |F(re^{\mathrm{i}\theta})|}{r} \leq k.
	\end{equation}
	Let $n(r)$ be the number of zeros of the function $F(z)$ that lie in the region 
	$$\{z\in \C\,:\, \mathrm{Re}\,z\geq 0\quad |z|< r \}.$$ 
	Then, there exists a number $B\leq \frac{k}{\pi}$ such that, 
	$$
	\lim_{r\to \infty} \frac{n(r)}{r}=B \leq \frac{k}{\pi}.
	$$
\end{theorem}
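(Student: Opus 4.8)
The plan is to count the zeros of $F$ in the right half-disk by means of a Carleman-type formula and to read off the density from the boundary behaviour of $\log|F|$, using the growth hypothesis \eqref{lim_sup_cond} on the semicircular arc and the integrability hypothesis \eqref{logarith_growth} on the imaginary/real directions. As preliminary structural input I would first record three consequences of the two hypotheses. Condition \eqref{lim_sup_cond}, together with a Phragm\'en--Lindel\"of argument, shows that $F$ is of exponential type at most $k$, so that $|F(z)|\le C_\eta e^{(k+\eta)|z|}$ for every $\eta>0$; Jensen's formula on the full disk then gives the crude a priori bound $n(r)=O(r)$. Condition \eqref{logarith_growth} places $F$ in the Cartwright class, from which I would extract the two facts that drive the argument: the zeros cluster near the real axis, in the quantitative form $\sum_n|\mathrm{Im}(1/a_n)|=\sum_n |\sin\theta_n|/r_n<\infty$ (writing the zeros as $a_n=r_n e^{\mathrm{i}\theta_n}$), and the indicator is regular along the imaginary directions, so that $\lim_{t\to\infty}\log|F(\pm \mathrm{i}t)|/t$ exists and equals $h(\pm\pi/2)\le k$.

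Second, I would apply Carleman's formula to the right half-disk $\{|z|\le R,\ \mathrm{Re}\,z\ge 0\}$. This expresses the cosine-weighted zero sum $\sum_{r_n<R}\bigl(\tfrac1{r_n}-\tfrac{r_n}{R^2}\bigr)\cos\theta_n$ as the sum of an integral of $\log|F(Re^{\mathrm{i}\theta})|\cos\theta$ over the semicircle, an integral of $\log|F(\mathrm{i}t)F(-\mathrm{i}t)|$ against $(\tfrac1{t^2}-\tfrac1{R^2})$ over the imaginary segment, and a term that remains bounded as $R\to\infty$. Since $\cos\theta_n\ge 0$ and $\tfrac1{r_n}-\tfrac{r_n}{R^2}\ge 0$ for $r_n\le R$, the left-hand side is nonnegative; by the clustering estimate it differs from the unweighted sum $\sum_{r_n<R}(\tfrac1{r_n}-\tfrac{r_n}{R^2})$ by a bounded quantity, and an integration by parts rewrites the latter as $\int_1^R n(t)/t^2\,dt$ up to bounded terms.

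Third, I would estimate the two boundary integrals. On the semicircle the exponential-type bound gives $\log|F(Re^{\mathrm{i}\theta})|\le kR+o(R)$ uniformly in $\theta$, and since $\int_{-\pi/2}^{\pi/2}\cos\theta\,d\theta=2$ this term contributes at most $\tfrac{k}{\pi}+o(1)$; this is exactly the mechanism that will force $B\le k/\pi$. On the imaginary segment the regularity of the indicator yields $\log|F(\pm \mathrm{i}t)|/t\to h(\pm\pi/2)$, so the imaginary-axis integral carries the leading growth $\tfrac{h(\pi/2)+h(-\pi/2)}{2\pi}\log R$. Combining the clustering of the zeros (which confines them to the positive-real-axis side and makes the cosine weighting asymptotically harmless) with the regular distribution of Cartwright-class zeros, I would then conclude that $n(r)/r$ converges, with $\lim_{r\to\infty}n(r)/r=B$ where $B=\tfrac{h(\pi/2)+h(-\pi/2)}{2\pi}\le \tfrac{k}{\pi}$, the last inequality being immediate since $h(\pm\pi/2)\le k$.

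The main obstacle is the passage from boundary-integral asymptotics to the genuine \emph{existence} of the limit $\lim n(r)/r$, rather than merely matching upper and lower density bounds. This is delicate precisely because Carleman's formula produces the logarithmically-normalized quantity $\int n(t)/t^2\,dt$, which is insensitive to oscillations of $n(t)/t$; overcoming this requires the full strength of the Cartwright condition \eqref{logarith_growth}, both through the clustering of zeros near the real axis and through the regularity of the indicator along the imaginary directions, which together guarantee that the zero distribution is regular and the imaginary-axis integral has a true asymptotic rather than a fluctuating one. By contrast, the bound $B\le k/\pi$ is the comparatively soft part, following directly from the exponential-type hypothesis \eqref{lim_sup_cond} via the semicircle estimate.
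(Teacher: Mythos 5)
The paper offers no proof of this statement: Theorem~\ref{thm_levinson} is quoted verbatim from Levinson's monograph \cite{Levinson1940} (see also \cite{Levin64}), so your attempt can only be measured against the classical argument --- which your skeleton does follow: Phragm\'en--Lindel\"of to upgrade \eqref{lim_sup_cond} to exponential type at most $k$, Carleman's formula on the right half-disk, the clustering estimate $\sum_n |\sin\theta_n|/r_n<\infty$, and the existence of $\lim_{t\to\infty} t^{-1}\log|F(\pm\mathrm{i}t)| = h(\pm\pi/2)$. The steps you actually carry out are sound: the comparison of the cosine-weighted and unweighted zero sums via $1-\cos\theta\le|\sin\theta|$ for $|\theta|\le\pi/2$, the integration by parts producing $\int_1^R n(t)\,t^{-2}\,dt$ up to bounded terms, and the identification $B=\bigl(h(\pi/2)+h(-\pi/2)\bigr)/(2\pi)\le k/\pi$. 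One local slip: the semicircular term in Carleman's formula is $(\pi R)^{-1}\int_{-\pi/2}^{\pi/2}\log|F(Re^{\mathrm{i}\theta})|\cos\theta\,d\theta\le 2k/\pi+o(1)$, not $k/\pi+o(1)$, and in any case it is merely a bounded contribution; it is not ``the mechanism that forces $B\le k/\pi$'' --- that inequality comes, as you correctly say later, from $h(\pm\pi/2)\le k$ entering through the imaginary-axis integral.

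The genuine gap is exactly where you flag it, and your proposal does not close it. Carleman's formula yields only $\int_1^R n(t)\,t^{-2}\,dt = B\log R + o(\log R)$, and no Tauberian principle converts this into $\lim_{r\to\infty} n(r)/r = B$, even granting monotonicity of $n$: for $B>2\varepsilon>0$ the nondecreasing function $n(t)=Bt+\varepsilon t\sin(\log t)$ satisfies $\int_1^R n(t)\,t^{-2}\,dt=B\log R+O(1)$ while $n(t)/t$ oscillates forever between $B-\varepsilon$ and $B+\varepsilon$; so even the sharpest $O(1)$ form of your boundary estimates cannot, by itself, produce the limit. You propose to bridge this by invoking ``the regular distribution of Cartwright-class zeros,'' but the statement that the zeros of a Cartwright-class function possess a density is precisely Levinson's theorem, i.e.\ the statement under proof; as written the argument is circular at its decisive step. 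Relatedly, the two structural ``facts'' you extract from \eqref{logarith_growth} --- the clustering of zeros near $\R$ and the regularity of the indicator along the imaginary directions --- are themselves substantial theorems of the Cartwright--Levinson theory, of depth comparable to the density statement and, in the classical sources, established by the same machinery (via the theory of functions of completely regular growth, or Levinson's refined analysis of the logarithmic integral). Citing them is legitimate in a sketch, but the one conclusion they do not hand you for free --- the existence of $\lim n(r)/r$ rather than matching upper and lower logarithmic averages --- is exactly the step your proposal leaves as an assertion; completing it requires either the completely-regular-growth theorem for Cartwright functions or a reproduction of Levinson's original argument, neither of which is outlined.
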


We are now ready to prove our main inverse spectral result.

\begin{proof}[Proof of Theorem~\ref{thm_spectral}]
	Let us define the entire function $F:\C \to \C$ via
	\begin{equation}\label{F_entire_def}
	F(z) = \psi^{(1)}(1-\varepsilon_1,z)- \psi^{(2)}(1-\varepsilon_1,z) \qquad \forall\, z\in \C,
	\end{equation}
and recall from \eqref{psi_hypo} that there holds:
	\begin{equation}\label{F_hypo}
	F(\sqrt{\lambda_k^{(1)}})=0 \qquad \forall\, k\in S.
\end{equation}
We claim that $F=0$ on $\C$. We give a proof by contradiction and assume for contrary that it is not identical to zero. For each $r>0$, let us define 
	$$ n(r)= \left|\left\{z\in \C\,:\, \mathrm{Re}\, z\geq 0 \quad \text{and}\quad  F(z)=0 \quad \text{and}\quad |z|<r\right\}\right|.$$
	Recalling the eigenvalue asymptotic expression (see e.g. \cite[Theorem 4, page 35]{Pschel1986InverseST}),
	\begin{equation}\label{eigen_asymp}
		\sqrt{\lambda_k^{(j)}}= k\pi + O(\frac{1}{k}) \quad \text{as $k\to \infty$}, \quad j=1,2,
		\end{equation}
	together with \eqref{F_hypo} and \eqref{S_prop}, we deduce that
	\begin{equation}\label{limsup_n}
	\limsup_{r\to \infty} \frac{n(r)}{r}\geq \frac{1-\varepsilon}{\pi}.
	\end{equation}
On the other hand, we know from \eqref{ptwise_estimate_1} that $F$ is uniformly bounded on the real axis which implies that \eqref{logarith_growth} is satisfied for the function $F$ above. Moreover, by applying \eqref{ptwise_estimate_1} for $j=1,2,$ and using the triangle's inequality, there holds
	$$
	|F(z)| \leq \frac{2C}{1+|z|^2} e^{(1-\varepsilon_1)|z|} \qquad \forall\, z\in \C,
	$$
	implying that for each $\theta \in [0,2\pi]$ we have
	$$
	\limsup_{r\to \infty}\frac{\log |F(re^{\mathrm{i}\theta})|}{r} \leq 1-\varepsilon_1.
	$$
	Thus, by Theorem~\ref{thm_levinson}, we deduce that $\lim_{r\to \infty} \frac{n(r)}{r}$ exists and that
	$$
	\lim_{r\to \infty} \frac{n(r)}{r} \leq \frac{1-\varepsilon_1}{\pi},
	$$
	which is a contradiction to \eqref{limsup_n} as $\varepsilon_1 \in (\varepsilon,1)$. Thus, $F$ must be identically zero which implies that
		\begin{equation}\label{F_vanish}
	\psi^{(1)}(1-\varepsilon_1,z) = \psi^{(2)}(1-\varepsilon_1,z) \quad \forall\, z\in \C.
	\end{equation}
	We may now repeat the same arguments as above but this time for the function 
	$$G(z)= \p_x\psi^{(1)}(1-\varepsilon_1,z) -\p_x\psi^{(2)}(1-\varepsilon_1,z) \quad \forall\, z\in \C,$$
	to conclude that $G$ must also vanish identically. This implies that 
	\begin{equation}\label{G_vanish}
	\p_x\psi^{(1)}(1-\varepsilon_1,z) = \p_x\psi^{(2)}(1-\varepsilon_1,z) \quad \forall\, z\in \C.
	\end{equation}
	Combining \eqref{F_vanish}-\eqref{G_vanish}, we deduce that the Weyl functions 
	$$m_j(z)= 	\frac{\p_x\psi^{(j)}(1-\varepsilon_1,z)}{\psi^{(j)}(1-\varepsilon_1,z)} \quad j=1,2,$$ 
	for the two potentials are equal as meromorphic functions in the complex plane, which implies that $V_1=V_2$ on $(0,1)$, see e.g. \cite{Mar52}.
\end{proof}

\section{Passive imaging: proof of Theorems~\ref{thm_1}-\ref{thm_1_support}}
\label{sec_passive_pf}
We begin with a lemma. 
\begin{lemma}
\label{lem_completeness}
Let $T>2$. For $j=1,2,$ let $V_j\in L^\infty((0,1))$ and denote by $\lambda_1^{(j)}<\lambda_2^{(j)}<\ldots$, the Dirichlet eigenvalues of the operator $-\frac{d^2}{dx^2}+V_j(x)$ on $(0,1)$. There exists 
\begin{equation}\label{eta_support}
	\eta_m\in L^2(\mathbb R)\quad \text{with}\quad \mathrm{supp}\,\eta_m \subset [0,T], 
\end{equation}
such that
\begin{equation}\label{eta_eq_lem}
	\begin{aligned}
\int_0^{T}\, \eta_m(t) \cos(\sqrt{z}t)\,dt&=0 \quad \forall\, z\in \left(\{\lambda_k^{(1)}\}_{k=1}^\infty\cup  \{\lambda_k^{(2)}\}_{k=1}^\infty\right)\setminus \{\lambda_m^{(1)}\} \\
\int_0^{T}\, \eta_m(t) \cos(\sqrt{\lambda_m^{(1)}}t)\,dt&=1.
\end{aligned}
\end{equation}	
Here, we recall that a finite number of $\lambda_k^{(j)}$'s could also be negative and so the square root and cosine functions need to be understood in the general sense.
	\end{lemma}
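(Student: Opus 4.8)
The plan is to construct $\eta_m$ explicitly through its cosine transform, reducing the statement to the production of a single entire function. Observe that the relations \eqref{eta_eq_lem} only constrain the values, at the points $z=\sqrt{\lambda_k^{(j)}}$, of the even entire function
\[
H(z):=\int_0^T \eta_m(t)\cos(zt)\,dt .
\]
By the Paley--Wiener theorem, finding $\eta_m\in L^2(\R)$ with $\mathrm{supp}\,\eta_m\subset[0,T]$ realising \eqref{eta_eq_lem} is equivalent to producing an even entire function $H$ that is real on $\R$, lies in $L^2(\R)$, has exponential type at most $T$, and satisfies $H(\sqrt{\lambda_m^{(1)}})=1$ together with $H(\sqrt{\lambda_k^{(j)}})=0$ at every other eigenvalue. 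The key observation is that the function $z\mapsto\psi^{(j)}(1,z)$ from \eqref{Psi_def_z}--\eqref{psi_initial_data} is exactly the generating function required: it is even in $z$ (since $\psi^{(j)}$ depends on $z$ only through $z^2$), entire, and vanishes precisely when $z^2$ is a Dirichlet eigenvalue, that is at the simple zeros $z=\pm\sqrt{\lambda_k^{(j)}}$ (simplicity following from the simplicity of the Dirichlet eigenvalues); moreover \eqref{ptwise_estimate_1} gives $|\psi^{(j)}(1,z)|\lesssim (1+|z|)^{-1}e^{|\mathrm{Im}\,z|}$, so that $\psi^{(j)}(1,\cdot)$ has exponential type $1$ and decays like $|z|^{-1}$ along the real axis.

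With this in hand I would set
\[
H(z)=c\,\frac{\psi^{(1)}(1,z)\,\psi^{(2)}(1,z)}{(z^2-\lambda_m^{(1)})^{p}},
\]
where $p\in\{1,2\}$ is the order of vanishing of the numerator at $z=\sqrt{\lambda_m^{(1)}}$ (equal to $1$ if $\lambda_m^{(1)}$ is a Dirichlet eigenvalue of $V_1$ alone, and to $2$ if it is a common eigenvalue of $V_1$ and $V_2$), and $c$ is a normalising constant. The numerator is even, entire, of exponential type $1+1=2<T$, and decays like $|z|^{-2}$ on $\R$; division by $(z^2-\lambda_m^{(1)})^{p}$ cancels exactly the zeros at $\pm\sqrt{\lambda_m^{(1)}}$, so $H$ stays entire, retains every other zero $\pm\sqrt{\lambda_k^{(j)}}$, remains of type $\le 2<T$, and now decays at least like $|z|^{-4}$ on $\R$, hence belongs to $L^2(\R)$. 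Choosing $c$ so that $H(\sqrt{\lambda_m^{(1)}})=1$ then delivers all the prescribed interpolation values, and since each $V_j$ is real the function $\psi^{(j)}(1,\cdot)$ is real on $\R$, so $c$ and $H$ may be taken real.

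Finally I would invoke Paley--Wiener to write $H(z)=\int_{-2}^{2}g(t)e^{izt}\,dt$ for some $g\in L^2(\R)$ supported in $[-2,2]$; since $H$ is even and real, $g$ may be chosen even and real, whence $H(z)=2\int_0^2 g(t)\cos(zt)\,dt$. Setting $\eta_m:=2g$ on $[0,2]$ and $\eta_m:=0$ on $(2,T]$ produces a function supported in $[0,2]\subset[0,T]$ with $\int_0^T\eta_m(t)\cos(zt)\,dt=H(z)$, so that evaluation at $z=\sqrt{\lambda_k^{(j)}}$ yields \eqref{eta_eq_lem}; for the finitely many negative eigenvalues one uses $\cos(\sqrt{\lambda}\,t)=\cosh(\sqrt{|\lambda|}\,t)$ and the identity $\int_0^T\eta_m(t)\cosh(\sqrt{|\lambda|}\,t)\,dt=H(i\sqrt{|\lambda|})=H(\sqrt{\lambda})=0$, which holds by analytic continuation. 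The main obstacle is the careful bookkeeping of the possible coincidence $\lambda_m^{(1)}\in\{\lambda_k^{(2)}\}_{k=1}^\infty$, which is precisely what the exponent $p$ resolves, together with the sharp matching of the exponential type $2$ of the product $\psi^{(1)}(1,\cdot)\psi^{(2)}(1,\cdot)$ against the admissible support length---exactly the room furnished by the hypothesis $T>2$.
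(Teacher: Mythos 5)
Your construction is correct, but it takes a genuinely different route from the paper. The paper never touches the characteristic functions $\psi^{(j)}(1,\cdot)$: it invokes Kahane's interpolation theorem (together with a theorem from Young's book and a lemma of Horv\'ath) to conclude that each cosine system $\{\cos(\sqrt{\lambda_k^{(j)}}\,t)\}_{k\in\N}$ is incomplete in $L^4((0,T/2))$, picks a nonzero annihilator $\tilde h^{(j)}_m$ for each $j$, and convolves their even extensions to manufacture a function supported in $(-T,T)$ whose Fourier transform vanishes on $\{\pm\sqrt{\lambda_k^{(1)}}\}_{k\in\N}\cup\{\pm\sqrt{\lambda_k^{(2)}}\}_{k\in\N}$; from that point on the two proofs coincide (divide by $(z^2-\lambda_m^{(1)})^s$ with $s$ the exact vanishing order, normalize, apply Paley--Wiener, and use evenness to pass to cosines on $(0,T)$). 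Your product $\psi^{(1)}(1,z)\,\psi^{(2)}(1,z)$ replaces the entire Kahane/completeness/convolution apparatus by an explicit entire function that already has the required zeros, exponential type $2$, and $O\bigl((1+|x|)^{-2}\bigr)$ decay on $\R$ by \eqref{ptwise_estimate_1}; this is more elementary and even slightly sharper, since your $\eta_m$ is supported in $[0,2]$ and the argument works for every $T\ge 2$, whereas the paper needs the strict inequality $T>2$ to make the density comparison $1/\pi<T/(2\pi)$ strict. What the paper's softer, density-based argument buys is portability: the identical scheme is reused in Lemma~\ref{lem_completeness_schrodinger}, where the interpolation nodes are $\{\pm\lambda_k^{(j)}\}_{k\in\N}$ (a set of density zero) and there is no characteristic function of controlled exponential type whose zero set contains them, so your device has no counterpart in that setting. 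One wrinkle to repair in your write-up: when $\lambda_m^{(1)}=0$, the numerator's zero at $z=\sqrt{\lambda_m^{(1)}}=0$ has order $2p$, not $p$ (an even entire function vanishes to even order at the origin), so defining $p$ as ``the order of vanishing of the numerator'' is not quite right in that case; your parenthetical prescription ($p=1$ if $\lambda_m^{(1)}$ is an eigenvalue of $V_1$ alone, $p=2$ if it is common to $V_1$ and $V_2$) is the correct one, because $(z^2-\lambda_m^{(1)})^p=z^{2p}$ then removes exactly the right order --- this is the same parity issue the paper handles by observing that its $s$ is necessarily even when $\lambda_m^{(1)}=0$.
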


\begin{proof}
	Let us define for each $j=1,2,$ the set 
	$$\Gamma_j=\{\pm \sqrt{\lambda_k^{(j)}}\,:\, \lambda_k^{(j)}>0\}.$$
	Recalling the eigenvalue asymptotic expression \eqref{eigen_asymp}, we deduce that $\Gamma_j$ is uniformly discrete (i.e. the pairwise distances of its elements has a uniform lower bound) and that
	$$ D^+(\Gamma_j) := \lim_{r\to\infty} \max_{x\in \R}\frac{|\Gamma_j\cap (x,x+r)|}{r}= \frac{1}{\pi}<\frac{T}{2\pi}.$$
	Applying the main result of \cite{Kahane1957SurLF} together with \cite[Theorem 7, pg. 129]{Young1980AnIT} it follows that for each fixed $j\in \{1,2\}$, the set of complex exponentials $\{e^{\pm\mathrm{i}\sqrt{\lambda_k^{(j)}}t}\}_{k\in \N}$ is not complete in $L^4((-\frac{T}{2},\frac{T}{2}))$. Combining this with \cite[Lemma 5.4]{Horvath2005InverseSP} we deduce that for each fixed $j\in \{1,2\}$, the set $\{ \cos(\sqrt{\lambda_k^{(j)}}t)\}_{k\in \N}$ is not complete in $L^4((0,\frac{T}{2}))$. It follows that there exists a nonzero function $\tilde{h}^{(j)}_m\in L^4((0,\frac{T}{2}))$ with $j=1,2$, such that  
	\begin{equation}
		\label{tilde_h_m_eq}
		\begin{aligned}
			\int_0^{\frac{T}{2}}\, \tilde{h}^{(1)}_m(t) \cos(\sqrt{z}t)\,dt&=0 \quad &\forall\, z\in \{\lambda_k^{(1)}\}_{k=1}^\infty\\
				\int_0^{\frac{T}{2}}\, \tilde{h}^{(2)}_m(t) \cos(\sqrt{z}t)\,dt&=0 \quad &\forall\, z\in \{\lambda_k^{(2)}\}_{k=1}^\infty
		\end{aligned}	
	\end{equation}
Here, we are also identifying $\tilde{h}^{(j)}_m$ as a function on all of $\mathbb R$ by setting it to be zero outside $(0,\frac{T}{2})$. Let us now define the even function $h_m^{(j)} \in L^4((-\frac{T}{2},\frac{T}{2}))$ for each $j=1,2,$ (also viewed as a function on $\mathbb R$ by setting it to be zero outside of $(-\frac{T}{2},\frac{T}{2})$) via
$$
h_m^{(j)}(t)= \begin{cases}
	\tilde{h}^{(j)}_m(t)\quad &\forall\, t\in (0,\frac{T}{2})\\
	\tilde{h}^{(j)}_m(-t) \quad &\forall\, t\in (-\frac{T}{2},0).
\end{cases}
$$
We can rewrite \eqref{tilde_h_m_eq} as follows,
	\begin{equation}
	\label{h_m_eq}
	\begin{aligned}
		\int_{-\frac{T}{2}}^{\frac{T}{2}}\, h^{(1)}_m(t) e^{\pm\mathrm{i}\sqrt{z}t}\,dt&=0 \quad &\forall\, z\in \{\lambda_k^{(1)}\}_{k=1}^\infty\\
			\int_{-\frac{T}{2}}^{\frac{T}{2}}\, h^{(2)}_m(t) e^{\pm\mathrm{i}\sqrt{z}t}\,dt&=0 \quad &\forall\, z\in \{\lambda_k^{(2)}\}_{k=1}^\infty
\end{aligned}
\end{equation}
Let us now define $\tilde{\eta}_m\in L^2(\mathbb R)$ with $\textrm{supp}\, \tilde{\eta}_m \subset (-T,T)$ by the expression 
\begin{equation}\label{tilde_eta_m}
	\tilde{\eta}_m(t) = \int_{-T}^T h^{(1)}_m(t-\tau)\,h^{(2)}_m(\tau)\,d\tau \quad \forall\,t\in \mathbb R.
	\end{equation}
Next, we define the entire function 
\begin{equation}\label{F_def_entire}
	F_m(z) = \int_{-T}^{T} \tilde{\eta}_m(t) e^{-\mathrm{i}zt}\,dt \quad \forall\, z\in \C.	
	\end{equation}
Combining \eqref{h_m_eq} and the convolution definition \eqref{tilde_eta_m} it follows that 
$$
F_m(z)=0 \quad \forall\, z\in \left\{\pm\sqrt{\lambda^{(1)}_k}\right\}_{k\in \N} \cup \left\{\pm\sqrt{\lambda^{(2)}_k}\right\}_{k\in \N}.
$$
Suppose that $z=\sqrt{\lambda_m^{(1)}}$ is a zero of order $s$ for the function $F(z)$. If $F(\sqrt{\lambda^{(1)}_m})$ is not zero, then we simply define $s=0$. Note also that $F(z)$ is an even function of $z$ since $\tilde{\eta}_m$ is an even function and the Fourier transform of an even function is also even.  With this in mind, let us define the entire function
$$
G_m(z) = \frac{F_m(z)}{(z^2-\lambda_m^{(1)})^s}  \quad \text{if $\lambda_m^{(1)} \neq 0$},
$$
and alternatively by 
$$
G_m(z) = \frac{F_m(z)}{z^s}  \quad \text{if $\lambda_m^{(1)}=0$}.
$$
We remark that in the latter case $s$ will be either zero or an even number. Thus, $G_m(z)$ is an even function in both cases. We also record that 
$$
G_m(\sqrt{\lambda_m^{(1)}}) \neq 0.
$$
As the function $$H_m(z)=\frac{2G_m(z)}{G_m(\sqrt{\lambda_m^{(1)}})},$$ is of exponential type $T$ and it is square-integrable along horizontal lines in the complex plane, it follows from the Paley-Wiener theorem that $H_m$ is the Fourier transform of some $\hat{\eta}_m \in L^2((-T,T))$. Thus,
	\begin{equation}
	\label{hat_eta_m_eq}
	\begin{aligned}
		&\int_{-T}^{T}\, \hat{\eta}_m(t)\,e^{\pm\mathrm{i}\sqrt{z}t}\,dt=0 \quad \forall\, z\in \left(\{\lambda_k^{(1)}\}_{k=1}^\infty\cup  \{\lambda_k^{(2)}\}_{k=1}^\infty\right)\setminus \{\lambda_m^{(1)}\} \\
		&\int_{-T}^{T}\, \hat{\eta}_m(t) e^{\pm \mathrm{i}\sqrt{\lambda^{(1)}_m}t}\,dt=2.
	\end{aligned}
\end{equation}
Defining $\eta_m \in L^2((0,T))$ to be the restriction of $\hat{\eta}_m$ to the interval $(0,T)$ the claim follows, since $\hat{\eta}_m$ is an even function (inverse Fourier transform of an even function is even). 
\end{proof}

We also need the following lemma.

\begin{lemma}
	\label{lem_zero_density_wave}
	Let $V \in C^4([0,1])$ and let us denote by $\lambda_1<\lambda_2<\ldots$ the Dirichlet eigenvalues for the operator $-\frac{d^2}{dx^2}+V(x)$ on $(0,1)$. Let $\phi_k\in C^6([0,1])$ be as defined by \eqref{eigen_functions}-\eqref{initial_data}. Let $f\in H^2_0((0,1))\cap H^3((0,1))$ and assume that $|f''(0)| \neq |f''(1)|$. Then, there exists $N\in \N$, such that 
	\begin{equation}\label{eq_nonzero_modes}
		\int_0^1 f(x)\,\phi_k(x) \,dx \neq 0 \quad \forall\, k\geq N.
		\end{equation}
\end{lemma}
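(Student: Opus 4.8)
The plan is to derive a precise asymptotic expansion for the generalized Fourier coefficients
\[
a_k := \int_0^1 f(x)\,\phi_k(x)\,dx \qquad (k\to\infty),
\]
exhibiting explicitly the leading boundary contributions of $f$, and then to read off non-vanishing from the hypothesis $|f''(0)|\neq|f''(1)|$. The decisive point is that $a_k$ decays like $\lambda_k^{-2}$, which is two orders below the accuracy of the crude pointwise bound \eqref{ptwise_estimate_1}; hence one \emph{cannot} simply substitute $\phi_k(x)\approx \frac{\sin(\sqrt{\lambda_k}\,x)}{\sqrt{\lambda_k}}$ into the integral, as the error $O(\lambda_k^{-1})$ would swamp the quantity of interest. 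The resolution is to extract the relevant smallness by integrating by parts against the eigenvalue equation $-\phi_k''+V\phi_k=\lambda_k\phi_k$ rather than through asymptotic substitution. This is the step I expect to be the main obstacle, and it is where the structural hypotheses $f\in H^2_0\cap H^3$ and $V\in C^4$ enter.

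First I would use $\lambda_k\phi_k=-\phi_k''+V\phi_k$ to write $\lambda_k a_k=\int_0^1 f(-\phi_k''+Vf^{-1}\cdots)$; more precisely, integrating by parts twice and using the boundary conditions $f(0)=f(1)=0$ and $\phi_k(0)=\phi_k(1)=0$ from \eqref{initial_data}, all boundary terms drop and one obtains
\[
\lambda_k a_k = -\int_0^1 \big(f''-Vf\big)\,\phi_k\,dx =: -\int_0^1 f_1\,\phi_k\,dx .
\]
Here $f_1:=f''-Vf\in H^1((0,1))$ (since $f''\in H^1$ and $Vf\in C^2\subset H^1$ by $V\in C^4$, $f\in H^3\hookrightarrow C^2$), and crucially $f_1(0)=f''(0)$, $f_1(1)=f''(1)$ because $f$ vanishes at the endpoints. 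Now $f_1$ no longer vanishes at the boundary, so one more integration by parts against the ODE, via $\phi_k=\lambda_k^{-1}(V\phi_k-\phi_k'')$, produces explicit boundary terms:
\[
\int_0^1 f_1\phi_k\,dx=\frac{1}{\lambda_k}\Big[\int_0^1 f_1 V\phi_k\,dx-\big(f''(1)\phi_k'(1)-f''(0)\big)+\int_0^1 f_1'\phi_k'\,dx\Big],
\]
using $\phi_k'(0)=1$. The two integral remainders are genuinely lower order: $\int_0^1 f_1 V\phi_k=O(\lambda_k^{-1/2})$ from the uniform bound $|\phi_k|=O(\lambda_k^{-1/2})$ in \eqref{ptwise_estimate_1}, while $\int_0^1 f_1'\phi_k'\,dx=o(1)$ follows from \eqref{ptwise_estimate_2} (writing $\phi_k'=\cos(\sqrt{\lambda_k}\,x)+O(\lambda_k^{-1/2})$) together with the Riemann--Lebesgue lemma applied to $f_1'\in L^2\subset L^1$. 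Combining, I obtain the clean two-term expansion
\[
\lambda_k^2\, a_k = f''(1)\,\phi_k'(1)-f''(0)+o(1)\qquad(k\to\infty).
\]

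To finish, I would control $\phi_k'(1)$: by \eqref{ptwise_estimate_2} one has $\phi_k'(1)=\cos(\sqrt{\lambda_k})+O(\lambda_k^{-1/2})$, and the eigenvalue asymptotics \eqref{eigen_asymp}, $\sqrt{\lambda_k}=k\pi+O(1/k)$, give $\cos(\sqrt{\lambda_k})=(-1)^k+O(1/k^2)$, whence $\phi_k'(1)=(-1)^k+o(1)$. Therefore
\[
\lambda_k^2\,a_k=(-1)^k f''(1)-f''(0)+o(1).
\]
Setting $c_0:=\min\big(|f''(1)-f''(0)|,\,|f''(1)+f''(0)|\big)$, the hypothesis $|f''(0)|\neq|f''(1)|$ forces $c_0>0$: indeed $f''(1)=f''(0)$ or $f''(1)=-f''(0)$ would each yield $|f''(0)|=|f''(1)|$. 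Since $|(-1)^k f''(1)-f''(0)|\geq c_0$ for every $k$, there exists $N$ such that $|\lambda_k^2 a_k|\geq c_0/2>0$ for all $k\geq N$, i.e. $a_k\neq 0$ for all $k\geq N$, which is exactly \eqref{eq_nonzero_modes}. (The footnote's generalization to $N$-fold conditions would follow by iterating the integration-by-parts scheme $N$ times, using $V\in C^{2N+2}$, $f\in H^{2N}_0\cap H^{2N+1}$ and $\phi_k\in C^{2N+2}$ to justify each step.)
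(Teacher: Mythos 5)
Your proof is correct, but it follows a genuinely different route from the paper's. The paper substitutes into the integral the refined eigenfunction expansion of Fulton (cited as \cite{FULTON1994297}),
\[
\phi_k(x)= \frac{\sin(k\pi x)}{k \pi}+\sum_{\ell=1}^3\Bigl(\frac{A_\ell(x)}{k^{\ell+1}}\sin(k\pi x)+ \frac{B_\ell(x)}{k^{\ell+1}}\cos(k\pi x)\Bigr) +O(k^{-5}),
\]
which is exactly where the hypothesis $V\in C^4$ is used; it then integrates the leading term by parts three times to obtain $\frac{1}{k^4\pi^4}\bigl(f''(1)\cos(k\pi)-f''(0)\bigr)+o(k^{-4})$, and shows the correction terms contribute only $O(k^{-5})$ (this last bound is where $f\in H^2_0$, i.e.\ $f'(0)=f'(1)=0$, is needed to kill boundary terms). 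You instead bootstrap through the eigenvalue equation: applying $\lambda_k\phi_k=-\phi_k''+V\phi_k$ twice with integration by parts extracts the boundary contribution $f''(1)\phi_k'(1)-f''(0)$ exactly, and the remainders are disposed of using only the first-order estimates \eqref{ptwise_estimate_1}--\eqref{ptwise_estimate_2}, Riemann--Lebesgue, and the eigenvalue asymptotics \eqref{eigen_asymp}. Both arguments land on the same formula $\lambda_k^2\int_0^1 f\phi_k = (-1)^k f''(1)-f''(0)+o(1)$, and the final non-vanishing step is identical. What each buys: the paper's expansion-based method yields the full higher-order structure of the coefficients and scales transparently to the footnote's generalization ($2N$ integrations by parts against an expansion to order $k^{-2N-3}$); your ODE-based method is more self-contained (no appeal to the Fulton expansion), in principle needs much less regularity of $V$ (the estimates \eqref{ptwise_estimate_1}--\eqref{ptwise_estimate_2} hold for $V\in L^\infty$, and you only need $(Vf)'\in L^1$), and uses only $f(0)=f(1)=0$ rather than the full $H^2_0$ boundary conditions. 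One cosmetic remark: the inline fragment ``$\lambda_k a_k=\int_0^1 f(-\phi_k''+Vf^{-1}\cdots)$'' is garbled, but the displayed identity that follows it is the correct one, so this does not affect the argument.
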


\begin{proof}
	It is well known (see e.g. \cite[Section 3, Section 4]{FULTON1994297})that if $V\in C^4([0,1])$, then there exists $\{A_\ell\}_{\ell=1}^3, \{B_\ell\}_{\ell=1}^3  \subset C^3([0,1])$ depending explicitly on $V$ such that given any $x\in [0,1]$ and $k\in \N$ there holds
	\begin{equation*}\label{eigen_expansion_lem}
		\phi_k(x)= \frac{\sin(k\pi x)}{k \pi}+\sum_{\ell=1}^3\left(\frac{A_\ell(x)}{k^{\ell+1}}\sin(k\pi x)+ \frac{B_\ell(x)}{k^{\ell+1}}\cos(k\pi x)\right) +O(k^{-5}),
	\end{equation*}
	where the modulus of convergence above (as $k\to \infty$) is uniform with respect to $x\in [0,1]$. Let us define for each $k\in \N$,
	$$a_k = \int_0^1 f(x)\,\frac{\sin(k\pi x)}{k\pi} \,dx,$$
	and 
	$$b_k = \int_0^1 f(x)\,\left(\sum_{\ell=1}^3\frac{A_\ell(x)}{k^{\ell+1}}\sin(k\pi x)+ \frac{B_\ell(x)}{k^{\ell+1}}\cos(k\pi x)\right)\,dx.$$
	Applying integration by parts, it follows that
	\begin{equation}
	\begin{aligned}
	a_k&= \frac{1}{k^2\pi^2 }\,\int_0^1 f'(x)\,\cos(k\pi x)\,dx \\
	&= -\frac{1}{k^3\pi^3}\,\int_0^1 f''(x)\sin(k\pi x)\,dx\\
	&=\frac{1}{k^4\pi^4}\left(f''(1)\cos(k\pi)-f''(0) \right)-\frac{1}{k^4\pi^4}\,\underbrace{\int_0^1 f'''(x)\,\cos(k\pi x)\,dx}_{\text{$o(1)$ as $k\to \infty$}}.
\end{aligned}	
\end{equation}
Note that 
$$
f''(1)\cos(k\pi)-f''(0)= f''(1)-f''(0) \neq 0 \quad \text{if $k$ is even},
$$
and 
$$
f''(1)\cos(k\pi)-f''(0)= -f''(1)-f''(0) \neq 0 \quad \text{if $k$ is odd}.
$$	
Using integration by parts again, it is straightforward to see that
$$
|b_k| =O(\frac{1}{k^5}) \quad \text{as $k\to \infty$}.
$$
The claim follows immediately from combining the above observations.
\end{proof}

We are ready to prove the theorem.

\begin{proof}[Proof of Theorem~\ref{thm_1}]
	We start by recalling for each $j=1,2,$ that
	\begin{equation}\label{u_12_exp}
				u_j(t,x)=\sum_{k=1}^\infty a_k^{(j)} \cos(\sqrt{\lambda_k^{(j)}}\,t)\phi_k^{(j)}(x), \quad t\in (0,T)\quad x\in (0,1),
		\end{equation}
	where 
		\begin{equation}\label{a_12_exp}
	a_k^{(j)} = \frac{1}{\|\phi_k^{(j)}\|^2_{L^2((0,1))}}\left(f_j, \phi_k^{(j)} \right)_{L^2((0,1))} \quad \forall\, k\in \N,
\end{equation}
and the convergence in the above infinite series is to be understood with respect to the  
$$C^1([0,T];L^2((0,1)))\cap C^0([0,T];H^1_0((0,1)))$$
topology. Recall from Lemma~\ref{lem_zero_density_wave} (with $V=V_1$ and $f=f_1$) that there exists $N \in \N$ such that
\begin{equation}\label{a_k_1}
a_k^{(1)} \neq 0 \quad \forall\, k \geq N.
\end{equation}
We claim that 
\begin{equation}\label{claim_eigen_wave}
	\lambda_k^{(1)} \in \{\lambda_\ell^{(2)}\}_{\ell=1}^\infty \quad \forall\, k\geq N. 	
	\end{equation}
We give a proof by contradiction. Suppose for contrary that there exists $m\geq N$ such that $\lambda_m^{(1)} \notin \{\lambda_\ell^{(2)}\}_{\ell=1}^\infty$. By Lemma~\ref{lem_completeness} there exists $\eta_m \in L^2((0,T))$ such that \eqref{eta_eq_lem} holds. We deduce via \eqref{DN_eq_1} that 
$$
\left(\p_x u_1(\cdot,0),\overline{\eta_m(\cdot)}\right)_{L^2((0,T))} = \left(\p_x u_2(\cdot,0),\overline{\eta_m(\cdot)}\right)_{L^2((0,T))} 
$$
Recall that $\p_x u_j(\cdot,0) \in L^2((0,T))$ for each $j=1,2$. Using the spectral expression \eqref{u_12_exp} together with the fact that $\p_x \phi_k^{(j)}(0)=1$, it follows that
 $$
 \sum_{k=1}^\infty a_k^{(1)}\left(\int_0^T\eta_m(t)\,\cos(\sqrt{\lambda_k^{(1)}}\,t)\,dt\right)= \sum_{k=1}^\infty a_k^{(2)}\left(\int_0^T\eta_m(t)\,\cos(\sqrt{\lambda_k^{(2)}}\,t)\,dt\right)
 $$
 In view of \eqref{eta_eq_lem} the above equation reduces to 
 $$
 a_m^{(1)}=0,
 $$
 which is a contradiction to \eqref{a_k_1}. This completes the proof of \eqref{claim_eigen_wave}. For the remainder of this proof we fix $\varepsilon>0$ to be small enough so that 
\begin{equation}\label{V_12_eq}
	\mathrm{supp}\,(V_1-V_2) \subset  [0,1-\varepsilon).\end{equation} 
Let $m\geq N$. In view of \eqref{claim_eigen_wave}, we note that 
 \begin{equation}\label{eigen_eq_wave}\lambda_m^{(1)}= \lambda_{b_m}^{(2)}\quad \text{for some $b_m \in \N$}.\end{equation}
% Note that the previous two equalities imply also that 
 %\begin{equation}
 %	\label{eigenfcn_eq_wave}
%	\phi_m^{(1)}(x)= \phi^{(2)}_{b_m}(x) \quad \forall\, x\in (0,\varepsilon) \quad \forall\, m\geq N.
% \end{equation}
By choosing $N_1>N$ sufficiently large, it follows from \eqref{eigen_eq_wave} together with the eigenvalue asymptotics \eqref{eigen_asymp} that there holds:
 \begin{equation}\label{eigen_eq_wave_1}\lambda_m^{(1)}= \lambda_{m}^{(2)}\quad \text{for all $m \geq N_1$}.\end{equation}
 Next, for each $m\geq N_1$, applying Lemma~\ref{lem_completeness} again, we deduce that there exists $\theta_m \in L^2((0,T))$ satisfying
 	\begin{equation}
 	\label{theta_m_eq}
 	\begin{aligned}
 		&\int_{0}^{T}\, \theta_m(t)\,\cos(\sqrt{\lambda^{(1)}_k}t)\,dt=\delta_{km} \quad &\forall\, k\in \N, \\
 		&\int_{0}^{T}\,\theta_m(t)\,\cos(\sqrt{\lambda^{(2)}_k}t)\,dt=\delta_{km} \quad &\forall\, k\in \N.
 	\end{aligned}
 \end{equation}
We deduce via \eqref{DN_eq_1} that 
$$
\left(\p_x u_1(\cdot,0),\overline{\theta_m(\cdot)}\right)_{L^2((0,T))} = \left(\p_x u_2(\cdot,0),\overline{\theta_m(\cdot)}\right)_{L^2((0,T))}. 
$$
Using the spectral expression \eqref{u_12_exp} together with the fact that $\p_x\phi_k^{(j)}(0)=1$, the previous equation implies that
$$
\sum_{k=1}^\infty a_k^{(1)}\left(\int_0^T\theta_m(t)\,\cos(\sqrt{\lambda_k^{(1)}}\,t)\,dt\right)= \sum_{k=1}^\infty a_k^{(2)}\left(\int_0^T\theta_m(t)\,\cos(\sqrt{\lambda_k^{(2)}}\,t)\,dt\right),
$$
which together with \eqref{theta_m_eq} implies that
\begin{equation}\label{a_m_12}
a_m^{(1)}= a_{m}^{(2)} \quad \forall\, m\geq N_1.
\end{equation}
Let us now consider \eqref{DN_eq_1} again and write 
$$
\left(\p_x u_1(\cdot,1),\overline{\theta_m(\cdot)}\right)_{L^2((0,T))} = \left(\p_x u_2(\cdot,1),\overline{\theta_m(\cdot)}\right)_{L^2((0,T))} 
$$
Analogously as above, the left hand side is equal to
$$
\sum_{k=1}^\infty a_k^{(1)}\left(\int_0^T\theta_m(t)\,\cos(\sqrt{\lambda_k^{(1)}}\,t)\,dt\right)\p_x\phi_k^{(1)}(1)= a_m^{(1)}\,\p_x\phi_m^{(1)}(1),
$$
while the right hand side is equal to 
$$
\sum_{k=1}^\infty a_k^{(2)}\left(\int_0^T\theta_m(t)\,\cos(\sqrt{\lambda_k^{(2)}}\,t)\,dt\right)\p_x\phi_k^{(2)}(1)= a_{m}^{(2)}\,\p_x\phi_{m}^{(2)}(1),
$$
Combining with \eqref{a_m_12}, we deduce that given each $m\geq N_1$, there holds
$$
\p_x\phi_m^{(1)}(1)= \p_x\phi_{m}^{(2)}(1)
$$
Defining the set $S= \{N_1,N_1+1,N_1+2,\ldots\}$ and with $\varepsilon$ as above, it is clear that the hypothesis of Theorem~\ref{thm_spectral} are now satisfied thus yielding that $V_1=V_2$ on $(0,1)$. The equality of the initial data $f_2=f_1$ simply follows from the unique continuation principle for wave equation and the fact that $T>2$.
\end{proof}
Let us complete the section by also providing a sketch of the proof of Theorem~\ref{thm_1_support} as it is rather similar to that of the previous theorem.
\begin{proof}[Proof of Theorem~\ref{thm_1_support}]
	The proof replicates the approach that we followed in proving Theorem~\ref{thm_1}. Indeed, as in the latter proof, we can show that for $N>1$ sufficiently large (depending only on $\|V_1\|_{L^\infty}((0,1))$ and  $\|V_2\|_{L^\infty}((0,1))$) and given any $k\geq N$ such that
	$$
	\left(f_1,\phi_k^{(1)}\right)_{L^2((0,1))}\neq 0,
	$$
	there holds:
	$$
	\lambda_k^{(1)}=\lambda_{k}^{(2)} \quad \text{and}\quad \p_x \phi_k^{(1)}(1)=\p_x\phi_{k}^{(2)}(1).
	$$
	It remains to characterize the density of the set 
	$$
	P:= \left\{ k\in \N\,:\, k\leq N \quad \text{or}\quad 	\left(f_1,\phi_k^{(1)}\right)_{L^2((0,1))}= 0\right\}.
	$$
	As $f_1$ is not identical to zero and as $\mathrm{supp}\,f_1\subset [0,\varepsilon]$, it follows from \cite[Lemma 3.4]{Feiz25} together with the asymptotic expression \eqref{eigen_asymp} for the eigenvalues that there holds:
	$$
	\limsup_{r\to \infty} \frac{|P\cap (0,r)|}{r} \leq \varepsilon.
	$$
	Thus, defining the set $S:= \N \setminus P$, it is clear that 
	$$
		\limsup_{r\to \infty} \frac{|S\cap (0,r)|}{r}\geq \liminf_{r\to \infty} \frac{|S\cap (0,r)|}{r} \geq 1-\varepsilon.
	$$
	Thus, the hypotheses of Theorem~\ref{thm_spectral} are satisfied. We conclude that $V_1=V_2$ on $(0,1)$. It follows from unique continuation that $f_1=f_2$ on $(0,1)$ as well.
	
	\end{proof}
\section{Active imaging: proof of Theorem~\ref{thm_2}}
\label{sec_active_pf}
We begin with two lemmas.
\begin{lemma}\label{lele}
	Let $\delta\in (0,1)$. Let $V\in C^4([0,1])$ and denote by $\lambda_1<\lambda_2<\ldots$, the Dirichlet eigenvalues of the operator $-\frac{d^2}{dx^2}+V(x)$ on $(0,1)$. Let $\phi_k\in C^6([0,1])$ be as defined in \eqref{eigen_functions}-\eqref{initial_data}. Let $f\in H^3_0((0,1))$, let $F\in H^1_0((0,1))$ and let $P\subset \N$ be defined by 
	\begin{equation}\label{Gamma_schrod}
		P= \left\{ k\in \N\,:\,\lambda_k =0 \quad \text{or}\quad \int_0^{\delta} \phi_k(t)\,dt=\int_0^1 (-F(t)+\lambda_k\,f(t))\phi_k(t)\,dt \right\}.		
		\end{equation}
	There holds, 
	\begin{equation}\label{P_delta}
	\limsup_{r\to \infty} \frac{|P \cap (0,r)|}{r} \leq \frac{\delta}{2}.
	\end{equation}
\end{lemma}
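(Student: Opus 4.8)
The plan is to compare the two sides of the defining relation for $P$ asymptotically in $k$, show that membership in $P$ (for large $k$) forces $\cos(k\pi\delta)$ to lie near $1$, and then estimate the density of the resulting frequency set.

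First I would simplify the right-hand side. Set $g:=-F-f''+Vf$. Integrating $\lambda_k\int_0^1 f\phi_k\,dt$ by parts twice and using the eigenvalue equation $-\phi_k''+V\phi_k=\lambda_k\phi_k$ together with $\phi_k(0)=\phi_k(1)=0=f(0)=f(1)$, one obtains $\int_0^1(-F+\lambda_k f)\phi_k\,dt=\int_0^1 g\,\phi_k\,dt$. Moreover $g\in H^1_0((0,1))$: it lies in $H^1$ since $F\in H^1_0$, $f''\in H^1$ (as $f\in H^3$) and $Vf\in H^1$, and it vanishes at both endpoints because $F$, $f$ and $f''$ all do there. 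Thus, discarding the at most one index with $\lambda_k=0$, membership $k\in P$ is equivalent to $L_k=R_k$, where $L_k:=\int_0^\delta\phi_k\,dt$ and $R_k:=\int_0^1 g\,\phi_k\,dt$.

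Next I would insert the eigenfunction expansion from \cite{FULTON1994297} used in Lemma~\ref{lem_zero_density_wave}, namely $\phi_k(x)=\frac{\sin(k\pi x)}{k\pi}+\frac{A_1(x)\sin(k\pi x)+B_1(x)\cos(k\pi x)}{k^2}+O(k^{-3})$ uniformly in $x$. For $L_k$ the leading term integrates to $\frac{1-\cos(k\pi\delta)}{k^2\pi^2}$ and the $k^{-2}$-correction integrates to $O(k^{-3})$ after one integration by parts on $[0,\delta]$, so $L_k=\frac{1-\cos(k\pi\delta)}{k^2\pi^2}+O(k^{-3})$. For $R_k$ the key is that $g$ vanishes at the endpoints: integrating the leading term by parts gives $\frac{1}{k\pi}\int_0^1 g\sin(k\pi t)\,dt=\frac{1}{k^2\pi^2}\int_0^1 g'\cos(k\pi t)\,dt=o(k^{-2})$ by Riemann--Lebesgue (since $g'\in L^2$), while the corrections are again $O(k^{-3})$; hence $R_k=o(k^{-2})$, strictly below the generic size $\Theta(k^{-2})$ of $L_k$. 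Combining, $k^2\pi^2(L_k-R_k)=1-\cos(k\pi\delta)+o(1)$, so for every $\sigma>0$ all sufficiently large $k\in P$ satisfy $\cos(k\pi\delta)>1-\sigma$; that is, up to finitely many indices $P\subseteq E_\sigma:=\{k\in\N:\cos(k\pi\delta)>1-\sigma\}$ for each $\sigma>0$.

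Finally, the upper density obeys $\limsup_{r\to\infty}\frac{|P\cap(0,r)|}{r}\le\inf_{\sigma>0}d(E_\sigma)$, where $d(E_\sigma)$ denotes the upper density of $E_\sigma$. Writing $1-\cos(k\pi\delta)=2\sin^2(k\pi\delta/2)$ gives $E_\sigma=\{k:\|k\delta/2\|<\beta_\sigma\}$ with $\beta_\sigma\to0$ as $\sigma\to0$, where $\|\cdot\|$ is the distance to the nearest integer. If $\delta/2$ is irrational, Weyl's equidistribution theorem yields $d(E_\sigma)=2\beta_\sigma\to0$; if $\delta/2=a/b$ in lowest terms, then for $\beta_\sigma<1/b$ the only admissible residue is $k\equiv0\pmod b$, so $d(E_\sigma)=1/b\le a/b=\delta/2$. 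In either case $\inf_{\sigma>0}d(E_\sigma)\le\delta/2$, which is \eqref{P_delta}. I expect this last density computation—extracting the sharp constant $\delta/2$ from the rational case via $1/b\le a/b$, and noting the bound is attained at $\delta=2/b$—to be the main point requiring care; the preceding asymptotics are routine once one sees that the regularity hypotheses $f\in H^3_0$ and $F\in H^1_0$ are precisely what force $g\in H^1_0$ and hence $R_k=o(k^{-2})$.
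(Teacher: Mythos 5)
Your proof is correct, and its analytic core coincides with the paper's: expand $\phi_k$ via the Fulton--Pruess asymptotics, show the right-hand side of the defining relation is $o(k^{-2})$ while $\int_0^\delta \phi_k\,dt = \frac{1-\cos(k\pi\delta)}{k^2\pi^2}+O(k^{-3})$, and conclude that membership in $P$ forces $\sin^2(k\pi\delta/2)$ to be small. You differ in two places, both legitimately. First, on the right-hand side you use the eigenvalue equation to move $\lambda_k$ onto $f$, producing the single function $g=-F-f''+Vf\in H^1_0$ and one Riemann--Lebesgue application; the paper instead estimates $\int_0^1 F\phi_k\,dt$ and $\lambda_k\int_0^1 f\phi_k\,dt$ separately by repeated integration by parts. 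Your reduction is slightly cleaner and makes transparent why the hypotheses $f\in H^3_0$, $F\in H^1_0$ are exactly what is needed. Second, and more substantively, your final density count splits into cases: Weyl equidistribution when $\delta/2$ is irrational (giving density $0$, strictly better than $\delta/2$) and residue arithmetic mod $b$ when $\delta/2=a/b$ (giving $1/b\leq a/b=\delta/2$, and identifying $\delta=2/b$ as the extremal case). The paper avoids any rational/irrational dichotomy: from $|\sin(k\pi\delta/2)|<\delta/8$ it places each $k\in\widetilde{P}$ in an interval of length $<1$ centered at $\frac{2n}{\delta}$, and since these intervals are spaced $\frac{2}{\delta}$ apart and each contains at most one integer, the density bound $\frac{\delta}{2}$ follows by direct covering, uniformly in $\delta$. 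Your route buys sharper information (the bound is attained only at reciprocal-type rational $\delta$); the paper's buys uniformity and elementarity, needing no equidistribution theorem. Both are complete proofs of \eqref{P_delta}.
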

 
 \begin{proof}
 		Recall that there exists $\{A_\ell\}_{\ell=1}^3, \{B_\ell\}_{\ell=1}^3  \subset C^3([0,1])$ depending explicitly on $V$ such that given any $x\in [0,1]$ and $k\in \N$ there holds
 	\begin{equation*}\label{eigen_expansion_lem_schrod}
 		\phi_k(x)= \frac{\sin(k\pi x)}{k \pi}+\sum_{\ell=1}^3\left(\frac{A_\ell(x)}{k^{\ell+1}}\sin(k\pi x)+ \frac{B_\ell(x)}{k^{\ell+1}}\cos(k\pi x)\right) +O(k^{-5}),
 	\end{equation*}
 	where the modulus of convergence above (as $k\to \infty$) is uniform with respect to $x\in [0,1]$. Using \eqref{eigen_expansion_lem_schrod}, it is straightforward to see that
 	\begin{equation}\label{F_decay}\int_0^1 F(t)\,\phi_k(t) \,dt = o(\frac{1}{k^2}) \quad \text{as $k\to \infty$},\end{equation}
 	where we have used the fact that $F\in H^1_0((0,1))$ and performed integration by parts to obtain the previous bound. Analogously, it can be seen that since $f\in H^3_0((0,1))$, there holds,
 	\begin{equation}\label{f_decay}
 	\lambda_k \int_0^1 f(t)\,\phi_k(t)\,dt = o(\frac{1}{k^2}) \quad \text{as $k\to \infty$}.
 		\end{equation}
 	We also have that 
  \begin{equation}\label{phi_decay}
	\int_0^\delta \sum_{\ell=1}^3\left(\frac{A_\ell(t)}{k^{\ell+1}}\sin(k\pi t)+ \frac{B_\ell(t)}{k^{\ell+1}}\cos(k\pi t)\right)\,dt = O(\frac{1}{k^3})\quad \text{as $k\to \infty$}.
  	\end{equation}
  Combining the previous three bounds, it is clear that the claim in the lemma follows if we can prove that the set 
  $$\widetilde{P} = \left\{k\in \N\,:\, \left|\int_0^\delta \sin(k\pi t)\,dt\right|<\frac{\delta^2}{32\pi k}\right\},$$
  satisfies 
  $$	\limsup_{r\to \infty} \frac{|\widetilde{P} \cap (0,r)|}{r} \leq \frac{\delta}{2}.$$ 
  This is due to the fact that the set $P$ is a subset of the union of $\widetilde{P}$ with at most a finite number of other positive integers. Let us now study the set $\widetilde{P}$. Note that
  $$
  \int_0^\delta \sin(k\pi t)\,dt = \frac{1}{k\pi}\left(1- \cos(k\pi\delta)\right)=\frac{2}{k\pi} \sin^2(\frac{k\pi\delta}{2}).
  $$
  Therefore, $k\in \widetilde{P}$ implies that 
  $$
  |\sin(\frac{k\pi\delta}{2})|<\frac{\delta}{8},
  $$
  and thus it must be that 
  $$k\in \left( \frac{2n}{\delta}-\frac{1}{2\pi},  \frac{2n}{\delta}+\frac{1}{2\pi}\right)\quad \text{for some $n\in \N$}.$$
  The claimed inequality \eqref{P_delta} follows immediately as the above interval has at most one integer within it for each $n \in \N$.
 \end{proof}
 
\begin{lemma}
	\label{lem_completeness_schrodinger}
	For $j=1,2,$ let $V_j\in L^\infty((0,1))$ and denote by $\lambda_1^{(j)}<\lambda_2^{(j)}<\ldots$, the Dirichlet eigenvalues of the operator $-\frac{d^2}{dx^2}+V_j(x)$ on $(0,1)$. Let $m\in \N$ and suppose that 
	\begin{equation}\label{lambda_m_schrod}
		\lambda_m^{(1)}\neq 0 \quad \text{for some fixed $m\in \N$}.\end{equation} 
		Let $\delta\in (0,1)$. There exists 
	\begin{equation}\label{eta_support_schrodinger}
		\eta_m\in L^2(\mathbb R)\quad \text{with}\quad \mathrm{supp}\,\eta_m \subset [0,\delta], 
	\end{equation}
	such that
	\begin{equation}
		\begin{aligned}
			\int_0^{\delta}\, \eta_m(t) e^{\mathrm{i}\,z\,t}\,dt&=0 \quad \forall\, z\in \left(\{\lambda_k^{(1)},\lambda_k^{(2)}\}_{k=1}^\infty \right)\setminus \{\lambda_m^{(1)}\}, \\
				\int_0^{\delta}\, \eta_m(t) e^{\mathrm{i}\,\lambda_m^{(1)}\,t}\,dt&=1, \\
			\int_0^{\delta}\, \eta_m(t)\,dt &=0.
		\end{aligned}
	\end{equation}	
\end{lemma}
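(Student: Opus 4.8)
The plan is to mirror the proof of Lemma~\ref{lem_completeness}, replacing the cosine system by the complex exponential system $\{e^{\mathrm{i}\lambda_k^{(j)}t}\}$ dictated by the temporal eigenmodes of the Schr\"odinger flow, and to exploit that the Dirichlet eigenvalues now grow \emph{quadratically}, $\lambda_k^{(j)}=k^2\pi^2+O(1)$ by \eqref{eigen_asymp}, so that the relevant frequencies have upper uniform density zero. First I would fix the short interval $(0,\delta/2)$ and adjoin the frequency $0$ to each eigenvalue sequence: for $j=1,2$ the set $\{\lambda_k^{(j)}\}_k\cup\{0\}$ is uniformly discrete (its consecutive gaps tend to $+\infty$) with $D^+=0<\delta/(4\pi)$. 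By Kahane's theorem \cite{Kahane1957SurLF} together with \cite[Theorem 7, pg. 129]{Young1980AnIT}, the system $\{e^{\mathrm{i}\lambda_k^{(j)}t}\}_k\cup\{1\}$ is not complete in $L^4((0,\delta/2))$, so there is a nonzero element $g^{(j)}$ of the dual space, supported in $[0,\delta/2]$, with
\begin{equation*}
	\int_0^{\delta/2} g^{(j)}(t)\,e^{\mathrm{i}\lambda_k^{(j)}t}\,dt=0\quad\forall\,k,\qquad \int_0^{\delta/2} g^{(j)}(t)\,dt=0.
\end{equation*}
The one-sided support is automatic, since an annihilating functional on $(0,\delta/2)$ is represented by a function supported in $[0,\delta/2]$; this is exactly what allows the data to be recorded on $(0,\delta)$ rather than on a symmetric interval.

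Next I would form the convolution $\tilde\eta_m=g^{(1)}*g^{(2)}$, which is supported in $[0,\delta]$ and, by Young's inequality, lies in $L^2$. The reason for convolving, rather than seeking a single annihilator for the union $\{\lambda_k^{(1)}\}_k\cup\{\lambda_k^{(2)}\}_k$, is that this union need not be uniformly discrete (the two spectra can be asymptotically arbitrarily close), so the separation behind the completeness criteria fails for the union while it holds for each sequence separately. Passing to the Fourier transform $\hat g^{(j)}(z)=\int g^{(j)}(t)e^{\mathrm{i}zt}\,dt$, the product $\widehat{\tilde\eta_m}(z)=\hat g^{(1)}(z)\,\hat g^{(2)}(z)$ is entire of exponential type $\leq\delta$, vanishes at every point of $\{\lambda_k^{(1)},\lambda_k^{(2)}\}_k$ and at $z=0$, is square-integrable on $\R$, and is bounded in the upper half-plane because $\tilde\eta_m$ is supported in $[0,\infty)$.

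I would then strip the zero at $\lambda_m^{(1)}$: letting $s\geq 0$ be its exact order, set $G_m(z)=\widehat{\tilde\eta_m}(z)/(z-\lambda_m^{(1)})^s$, so that $G_m(\lambda_m^{(1)})\neq 0$ while $G_m$ still vanishes on $\{\lambda_k^{(1)},\lambda_k^{(2)}\}_k\setminus\{\lambda_m^{(1)}\}$ and at the origin (here the hypothesis $\lambda_m^{(1)}\neq 0$ is used, so the zero at $z=0$ is untouched). Division by the polynomial preserves the exponential type $\leq\delta$ and the boundedness in the upper half-plane, and only improves the decay on $\R$, so $G_m$ remains the Fourier transform of some function in $L^2((0,\delta))$ by the (one-sided) Paley--Wiener theorem. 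Normalizing by $G_m(\lambda_m^{(1)})$ produces the desired $\eta_m$ supported in $[0,\delta]$ with $\hat\eta_m(\lambda_m^{(1)})=1$, $\hat\eta_m=0$ on the remaining eigenvalues, and $\hat\eta_m(0)=0$.

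The step I expect to be most delicate is the bookkeeping that simultaneously secures the correct one-sided support and the normalization: one must verify that convolving two one-sided factors keeps the support in $[0,\delta]$ (rather than in a symmetric interval of length $\delta$), that excising the finite-order zero at $\lambda_m^{(1)}$ neither destroys the zero at the origin nor spoils the upper half-plane bound required to reapply Paley--Wiener, and that the exact-order division genuinely yields $G_m(\lambda_m^{(1)})\neq 0$ so that the final normalization is well defined.
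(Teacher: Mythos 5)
Your proposal is correct and takes essentially the same route as the paper's proof: incompleteness of the exponential system via Kahane \cite{Kahane1957SurLF} together with \cite[Theorem 7, pg. 129]{Young1980AnIT}, separate annihilators for each spectrum augmented by the frequency $0$, convolution to merge them, division by the exact-order zero at $\lambda_m^{(1)}$ (where $\lambda_m^{(1)}\neq 0$ protects the zero at the origin), and Paley--Wiener plus normalization. The only difference is technical bookkeeping at the end: the paper takes annihilators on $(0,\delta/4)$, applies the symmetric $L^2$ Paley--Wiener theorem to land in $L^2((-\frac{\delta}{2},\frac{\delta}{2}))$, and then translates by $\frac{\delta}{2}$ with the phase correction $e^{-\frac{1}{2}\mathrm{i}\lambda_m^{(1)}\delta}$, whereas you keep one-sided supports throughout by invoking a one-sided Paley--Wiener theorem based on boundedness in the upper half-plane --- both variants are valid.
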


\begin{proof}
	Let us define for each $j=1,2,$ the set 
	$$\Gamma_j=\{\pm \lambda_k^{(j)}\}_{k\in \N} \cup \{0\}.$$
	Recalling the eigenvalue asymptotics \eqref{eigen_asymp}, we deduce that $\Gamma_j$ is uniformly discrete and that
	$$ D^+(\Gamma_j) :=\lim_{r\to \infty} \max_{x\in \R} \frac{|\Gamma_j\cap (x,x+r)|}{r}= 0<\frac{\delta}{8\pi}.$$
	It follows from combining the main result of \cite{Kahane1957SurLF} together with \cite[Theorem 7, pp. 129]{Young1980AnIT} that the set $\{e^{\pm \mathrm{i}\,\lambda^{(1)}_k\,t}\}_{k\in \N}$ is not complete in $L^4((0,\frac{\delta}{4}))$. The rest of the proof can be done analogously to the proof of Lemma~\ref{lem_completeness} but we include it here for the sake of completeness. There exists a nonzero function $\theta_m^{(j)} \in L^4(\mathbb R)$, $j=1,2,$, with supp$(\theta_m^{(j)})\subset [0,\frac{\delta}{4}]$, such that
		\begin{equation}
			\label{theta_m}
		\begin{aligned}
			\int_0^{\frac{\delta}{4}}\, \theta^{(1)}_m(t) e^{\mathrm{i}\,z\,t}\,dt&=0 \quad \forall\, z\in \{\lambda_k^{(1)}\}_{k=1}^\infty, \\
				\int_0^{\frac{\delta}{4}}\, \theta^{(2)}_m(t) e^{\mathrm{i}\,z\,t}\,dt&=0 \quad \forall\, z\in \{\lambda_k^{(2)}\}_{k=1}^\infty, \\
		\int_0^{\frac{\delta}{4}}\, \theta_m^{(j)}(t)\,dt &=0 \quad j=1,2.
		\end{aligned}
	\end{equation}	
Let us now define $\tilde{\eta}_m\in L^2(\mathbb R)$ with $\mathrm{supp}\, \tilde{\eta}_m \subset [0,\frac{\delta}{2}]$ via the convolution of the two functions above, namely
\begin{equation}\label{tilde_eta_m_schrod}
	\tilde{\eta}_m(t) = \int_{0}^{\frac{\delta}{4}} \theta^{(1)}_m(t-\tau)\,\theta^{(2)}_m(\tau)\,d\tau \quad \forall\, t\in \mathbb R.
\end{equation}
Next, we define the entire function 
\begin{equation}\label{F_def_entire_schrod}
	F_m(z) = \int_{0}^{\frac{\delta}{2}} \tilde{\eta}_m(t) e^{-\mathrm{i}zt}\,dt \quad \forall\, z\in \C.	
\end{equation}
Combining \eqref{theta_m} and the convolution definition \eqref{tilde_eta_m_schrod} it follows that 
$$
F_m(z)=0 \quad \forall\, z\in \left\{-\lambda^{(1)}_k\right\}_{k\in \N} \cup \left\{-\lambda^{(2)}_k\right\}_{k\in \N}\cup \{0\}.
$$
Suppose that $z=-\lambda_m^{(1)}$ is a zero of order $s$ for the function $F(z)$. If $F(-\lambda^{(1)}_m)$ is not zero, then we simply define $s=0$. With this in mind, let us define the entire function
$$
G_m(z) = \frac{F_m(z)}{(z+\lambda_m^{(1)})^s}.
$$
We record that 
$$
G_m(-\lambda_m^{(1)}) \neq 0.
$$
As the function $$H_m(z)=\frac{G_m(z)}{G_m(-\lambda_m^{(1)})},$$ is of exponential type $\frac{\delta}{2}$ and it is square-integrable along horizontal lines in the complex plane, it follows from the Paley-Wiener theorem that $H_m$ is the Fourier transform of some $\hat{\eta}_m \in L^2((-\frac{\delta}{2},\frac{\delta}{2}))$. Thus,
\begin{equation}
	\label{hat_eta_m_eq_schrod}
	\begin{aligned}
		\int_{-\frac{\delta}{2}}^{\frac{\delta}{2}}\, \hat{\eta}_m(t)\,e^{-\mathrm{i}zt}\,dt&=0 \quad \forall\, z\in \{-\lambda_k^{(1)},-\lambda_k^{(2)}\}_{k=1}^\infty \setminus \{-\lambda_m^{(1)}\}, \\
		\int_{-\frac{\delta}{2}}^{\frac{\delta}{2}}\, \hat{\eta}_m(t) e^{\mathrm{i}\lambda^{(1)}_mt}\,dt&=1,\\
		\int_{-\frac{\delta}{2}}^{\frac{\delta}{2}} \hat{\eta}_m(t)\,dt&=0.
	\end{aligned}
\end{equation}
 The proof is completed by defining $$\eta_m(t)=e^{-\frac{1}{2}\mathrm{i}\lambda_m^{(1)}\delta}\,\hat{\eta}_m(t-\frac{\delta}{2}) \quad  \forall\, t\in (0,\delta).$$  
	\end{proof}

\begin{proof}[Proof of Theorem~\ref{thm_2}]
	Recall from the statement of the theorem that there exists $\varepsilon \in (0,1)$ such that 
	\begin{equation}\label{V_1_2_pf}
		\mathrm{supp}\, (V_1-V_2) \subset [0,1-\varepsilon).
	\end{equation}
We will assume throughout the rest of this proof that $0<\delta<\min\{1,2\varepsilon\}$. We start by writing for each $j=1,2,$ that
	\begin{equation}\label{u_12_exp_schrod}
		u_j(t,x)=\sum_{k=1}^\infty A_k^{(j)}(t)\,\phi_k^{(j)}(x), \quad t\in (0,\delta)\quad x\in (0,1),
	\end{equation}
	where for each $k\in \N$, when $\lambda_k^{(j)}\neq0$ we have
$$	A_k^{(j)}(t) = e^{\mathrm{i}\lambda_k^{(j)}t}\left(\frac{\left(f_j, \phi_k^{(j)} \right)_{L^2((0,1))}}{\|\phi_k^{(j)}\|^2_{L^2((0,1))}}\, -\frac{1}{\lambda_k^{(j)}}\left(1-e^{-\mathrm{i}\lambda_k^{(j)}t}\right)\frac{\left(\mathbbm{1}_\delta+F_j, \phi_k^{(j)} \right)_{L^2((0,1))}}{\|\phi_k^{(j)}\|^2_{L^2((0,1))}}\right)$$
and when $\lambda_k^{(j)}=0$ we have
$$A_k^{(j)}(t)=\frac{\left(f_j, \phi_k^{(j)} \right)_{L^2((0,1))}}{\|\phi_k^{(j)}\|^2_{L^2((0,1))}}\, -\mathrm{i}t\frac{\left(F_j+\mathbbm{1}_\delta , \phi_k^{(j)} \right)_{L^2((0,1))}}{\|\phi_k^{(j)}\|^2_{L^2((0,1))}}.$$
The convergence in the above infinite series is to be understood with respect to the  
	$C([0,T];H^1_0((0,1)))$ topology. Let us define the set 
	$$
	P=  \left\{ k\in \N\,:\, \lambda_k^{(1)}=0 \quad \text{or}\quad \int_0^{\delta} \phi^{(1)}_k(t)\,dt=\int_0^1( -F_1(t)+\lambda_k^{(1)}f_1(t))\phi^{(1)}_k(t)\,dt \right\}.
	$$
	Note that in view of Lemma~\ref{lele}, there holds:
		\begin{equation}\label{P_delta_1}
		\limsup_{r\to \infty} \frac{|P \cap (0,r)|}{r} \leq \frac{\delta}{2}<\varepsilon.
	\end{equation}
We claim that 
\begin{equation}\label{claim_eigen_schrod}
	\lambda_m^{(1)} = \lambda_{b_m}^{(2)} \qquad \forall\, m\in \N\setminus P,	
	\end{equation}
for some sequence $b_1<b_2<\ldots$ of positive integers. To prove this, suppose for contrary that there exists $m\in \N \setminus P$ such that 
\begin{equation}\label{lambda_m_hypo}
	\lambda_m^{(1)} \notin \{\lambda_k^{(2)}\}_{k=1}^\infty.
	\end{equation}
Applying Lemma~\ref{lem_completeness_schrodinger} we deduce that there exists $\eta_m\in L^2((0,\delta))$ such that
	\begin{equation}\label{eta_m_schrod}
	\begin{aligned}
		\int_0^{\delta}\, \eta_m(t) e^{\mathrm{i}\,\lambda_k^{(1)}\,t}\,dt&=\delta_{km} \quad \forall\, k\in \N,\\
		\int_0^{\delta}\, \eta_m(t) e^{\mathrm{i}\,\lambda_k^{(2)}\,t}\,dt&=0, \quad \forall\, k\in \N,\\
		\int_0^{\delta}\, \eta_m(t)\,dt &=0.
	\end{aligned}
\end{equation}	
Applying \eqref{DN_eq_2} at $x=0$, and recalling that $\p_x \phi_k^{(1)}(0)=\p_x \phi_k^{(2)}(0)=1$, we deduce that
$$
\sum_{k=1}^\infty \int_0^\delta A_k^{(1)}(t)\eta_{m}(t)\,dt = \sum_{k=1}^\infty \int_0^\delta A_k^{(2)}(t)\eta_{m}(t)\,dt.
$$
Using \eqref{eta_m_schrod} it is straightforward to see that the right hand side of the above expression is zero, while the left hand side is equal to 
$$
\frac{1}{\lambda_m^{(1)}\,\|\phi_m^{(1)}\|^2_{L^2((0,1))}}\left(-\int_0^\delta \phi_m^{(1)}(t)\,dt-\int_0^1 F_1(x)\phi_m^{(1)}(t)\,dt+ \lambda_m^{(1)}\int_0^1 f_1(t)\,\phi_m^{(1)}(t)\,dt \right),
$$
which is nonzero, thanks to the fact that $m\in \N \setminus P$ together with definition of $P$. This yields a contradiction thus proving that \eqref{claim_eigen_schrod} holds. By choosing $N>1$ sufficiently large we can combine \eqref{claim_eigenfcn_schrod} with \eqref{eigen_asymp} to obtain that 
\begin{equation}\label{claim_eigen_schrod_1}
	\lambda_m^{(1)} = \lambda_{m}^{(2)} \qquad \forall\, m\in (\N\setminus P) \cap \{N,N+1,\ldots\}.	
\end{equation}
Next, we claim that 
\begin{equation}\label{claim_eigenfcn_schrod}
	\p_x\phi_m^{(1)}(1) = \p_x\phi_{m}^{(2)}(1) \qquad \forall\, m\in (\N\setminus P) \cap \{N,N+1,\ldots\}.	
\end{equation}
Let us prove \eqref{claim_eigenfcn_schrod} for a fixed $m\in (\N\setminus P) \cap \{N,N+1,\ldots\}$. By Lemma~\ref{lem_completeness_schrodinger}, there exists $\theta_m \in L^2((0,\delta))$ such that
	\begin{equation}\label{theta_m_schrod}
	\begin{aligned}
		\int_0^{\delta}\, \theta_m(t) e^{\mathrm{i}\,\lambda_k^{(1)}\,t}\,dt&=\delta_{km} \quad \forall\, k\in \N,\\
		\int_0^{\delta}\, \theta_m(t) e^{\mathrm{i}\,\lambda_k^{(2)}\,t}\,dt&=\delta_{km}, \quad \forall\, k\in \N,\\
		\int_0^{\delta}\, \theta_m(t)\,dt &=0.
	\end{aligned}
\end{equation}	
Applying \eqref{DN_eq_2} at $x=0$ again, and recalling that $\p_x \phi_k^{(1)}(0)=\p_x \phi_k^{(2)}(0)=1$, we deduce that
$$
\sum_{k=1}^\infty \int_0^\delta A_k^{(1)}(t)\theta_{m}(t)\,dt = \sum_{k=1}^\infty \int_0^\delta A_k^{(2)}(t)\theta_{m}(t)\,dt.
$$
Using \eqref{theta_m_schrod} we deduce that
\begin{equation}\label{a_1_2_eq}
a_m^{(1)}=a_m^{(2)} \neq 0,
\end{equation}
where 
$$
a_m^{(1)}=\frac{1}{\lambda_m^{(1)}\,\|\phi_m^{(1)}\|^2_{L^2((0,1))}}\left(-\int_0^\delta \phi_m^{(1)}(t)\,dt- \int_0^1 F_1(t)\phi_m^{(1)}(t)\,dt+\lambda_m^{(1)}\int_0^1 f_1(t)\,\phi_m^{(1)}(t)\,dt \right),
$$
and 
$$
a_m^{(2)}= \frac{1}{\lambda_{m}^{(2)}\,\|\phi_{m}^{(2)}\|^2_{L^2((0,1))}}\left(-\int_0^\delta \phi_{m}^{(2)}(t)\,dt-\int_0^1 F_2(t)\phi_{m}^{(2)}(t)\,dt+\lambda_{m}^{(2)}\int_0^1 f_2(t)\,\phi_{m}^{(2)}(t)\,dt \right),
$$
and we recall from the definition of the set $P$ that $a_m^{(1)} \neq 0$. Next, we apply \eqref{DN_eq_2} at $x=1$, to deduce that
$$
\sum_{k=1}^\infty \left(\int_0^\delta A_k^{(1)}(t)\theta_{m}(t)\,dt\right) \p_x \phi_k^{(1)}(1)= \sum_{k=1}^\infty\left( \int_0^\delta A_k^{(2)}(t)\theta_{m}(t)\,dt\right) \p_x \phi_k^{(2)}(1).
$$
Using \eqref{theta_m_schrod} together with \eqref{a_1_2_eq} yields the claim \eqref{claim_eigenfcn_schrod}. We may now apply Theorem~\ref{thm_spectral} with $S=\N \setminus (P\cup \{1,\ldots,N\})$ (note that \eqref{S_prop} is satisfied thanks to \eqref{P_delta_1} together with the fact that removing a finite number of elements of a set does not change its density) to conclude that $V_1=V_2$. 
\end{proof}

\appendix

\section{Spectral interpolation for Schr\"odinger operators}

Our aim in this appendix section is to provide an interpolation result for 1-D Schr\"odinger operators that may be of independent interest. Let us first give a brief outline of the classical interpolation theory related to Fourier transforms on $\mathbb R$. Let $I=[a,b]$ for some $a<b$ and define the Paley-Wiener space
\[
PW_{I} = \left\{ f \in L^2(\mathbb{R}) : f = \widehat{F} \text{ and } \mathrm{supp}\, F \subset [a,b] \right\},
\]
where $\widehat{F}$ denotes the Fourier transform of $F$. Let $\Gamma = \{\gamma_k\}_{k=1}^\infty \subset \R$ be uniformly discrete (i.e. the pairwise distance of its elements has a uniform lower bound) and that its uniform upper density satisfies
\begin{equation}\label{G_density_PW}
	D^+(\Gamma):=\lim_{r\to \infty}\max_{x\in \R}\frac{\left|\Gamma \cap (x,x+r)\right|}{r} < \frac{b-a}{2\pi}.
\end{equation}

\begin{itemize}
	\item[{\bf(Q1)}] {\em Does there exist a continuous map $L: \ell^2(\mathbb{N}) \to L^2(\mathbb{R})$ such that for any $c \in \ell^2(\mathbb{N})$, the function $f = Lc$ satisfies
	\begin{equation}\label{PW_inter}
		f \in PW_{I} \quad \text{and} \quad f(\gamma_k) = c_k \quad \text{for all } k \in \mathbb{N}?
	\end{equation}}
\end{itemize}

This question, along with related variants—such as replacing the interval $(a,b)$ with more general measurable sets or considering different function spaces—has been extensively studied; see, e.g., \cite{Beurling1962OnFT,Beurling1989TheCW,Landau1967NecessaryDC,Meyer2018MeanperiodicFA,Olevski2009InterpolationIB} for such results as well as \cite[Section 4]{Young1980AnIT} for an introduction into this topic. The precise formulation {\bf(Q1)} above was answered affirmatively by Kahane in 1957 \cite{Kahane1957SurLF}, who also showed that the density condition is nearly optimal: the inequality $D^+(\Gamma) \leq \frac{b-a}{2\pi}$ is in fact necessary. 

We aim to study an analogue of the above interpolation problem in the context of 1-D Schr\"odinger operators on a bounded interval. To be precise, we first pose an analogue of {\bf(Q1)} for Schr\"odinger operators, as follows. Let $V \in L^\infty((0,1))$ be real-valued and suppose that its Dirichlet spectrum and corresponding eigenfunctions are defined as in \eqref{eigen_functions}--\eqref{initial_data}. We now ask:

\begin{itemize}
	\item[{\bf(Q2)}] {\em Given $\varepsilon \in (0,1)$, under what assumptions on a sequence $c = (c_1, c_2, \ldots)$ and a subset $P=\{p_k\}_{k=1}^\infty \subset \mathbb{N}$ with $p_1<p_2<\ldots$, does there exist a continuous linear map $L: X \to L^2((0, \varepsilon))$, where $X$ is a suitable Banach subspace of $\ell^2(\mathbb{N})$, such that the interpolation condition
	\begin{equation}\label{interpolation_schrodinger}
		\int_0^{\varepsilon} (Lc)(x)\, \phi_{p_k}(x)\, dx = c_k \quad \text{for all } k \in \N
	\end{equation}
	is satisfied for all $c\in X$?}
\end{itemize}

To answer {\bf (Q2)}, we introduce the Hilbert space $\ell^2_P(\mathbb{N})$, defined as the completion of sequences $c = (c_1, c_2, \ldots)$ with respect to the norm
\begin{equation}\label{l21_norm}
	\|c\|_{\ell^2_P(\mathbb{N})}^2 = \sum_{k=1}^\infty p_k^2\, |c_k|^2,
\end{equation}
and endowed with the inner product
\[
(c,d)_{\ell^2_P(\mathbb{N})} = \sum_{k=1}^\infty p_k^2\, c_k \,\overline{d_k}.
\]
We remark that the interpolation question {\bf (Q2)} is equivalent to studying the surjectivity of the linear mapping 
$$
\mathcal L: L^2((0,\varepsilon)) \to \ell^2_P(\N)
$$ 
defined by 
$$
\mathcal L f = \left(\left(f, \phi_{p_1}\right)_{L^2((0,\varepsilon))},\left(f, \phi_{p_2}\right)_{L^2((0,\varepsilon))},\ldots  \right).
$$
The fact that $\mathcal Lf \in \ell^2_P(\N)$ follows from combining the asymptotic expression for eigenvalues and eigenfunctions, see e.g. \cite[Theorem 4, page 35]{Pschel1986InverseST}. %Let us also mention that the adjoint of the above mapping, denoted by $\mathcal L^\star: \ell_P^2(\N) \to L^2((0,\varepsilon))$ is given by 
%$$
%\mathcal L^\star c = \sum_{k=1}^\infty c_k \phi_{p_k}|_{(0,\varepsilon)}.
%$$
We prove the following interpolation theorem that answers {\bf (Q2)} affirmatively under certain assumption on the set $P$. Our proof also shows that the adjoint map $\mathcal L^\star$ is injective and has a closed range.

\begin{theorem}[Interpolation theorem for 1-D Schr\"odinger operators]\label{thm_interpolation}
	Let $\varepsilon \in (0,1)$ and let $P \subset \mathbb{N}$ be a countably infinite set satisfying $D^+(P) < \varepsilon$. Let $V \in L^\infty((0,1))$ be a real-valued function, and denote by $\lambda_1 < \lambda_2 < \cdots$ the Dirichlet eigenvalues of the operator $- \frac{d^2}{dx^2} + V(x)$ on $(0,1)$. Let $\{\phi_k\}_{k=1}^\infty \subset W^{2,\infty}((0,1))$ be as in \eqref{eigen_functions}--\eqref{initial_data}. Then there exists a continuous linear map $L : \ell^2_P(\mathbb{N}) \to L^2((0, \varepsilon))$ such that \eqref{interpolation_schrodinger} holds\footnote{Our proof also shows near optimality of this result; for a set $P\subset \N$ to be an interpolation set, in the sense of \eqref{interpolation_schrodinger}, it is necessary that $D^+(P)\leq \varepsilon$.}.
\end{theorem}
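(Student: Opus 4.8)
The plan is to use the reduction, already recorded above, of the interpolation problem to the surjectivity of the analysis operator $\mathcal L\colon L^2((0,\varepsilon))\to\ell^2_P(\N)$. Since a bounded operator between Hilbert spaces is surjective precisely when its adjoint is bounded below, I would prove the estimate
\begin{equation*}
\norm{\mathcal L^\star d}_{L^2((0,\varepsilon))}\geq c\,\norm{d}_{\ell^2_P(\N)}\qquad\forall\,d\in\ell^2_P(\N)
\end{equation*}
for some $c>0$; this simultaneously shows that $\mathcal L^\star$ is injective with closed range (as announced) and that $\mathcal L\mathcal L^\star\geq c^2\,\mathrm{Id}$ is invertible on $\ell^2_P(\N)$. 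The continuous right inverse $L:=\mathcal L^\star(\mathcal L\mathcal L^\star)^{-1}$ then satisfies $\mathcal L L=\mathrm{Id}$, which is exactly the interpolation identity \eqref{interpolation_schrodinger}. A short computation, using that the $\phi_k$ are real-valued and the weight appearing in \eqref{l21_norm}, identifies the adjoint as $\mathcal L^\star d=\sum_{k}p_k^2\, d_k\,\phi_{p_k}$. Setting $c_k:=p_k d_k$, so that $\norm{d}_{\ell^2_P(\N)}^2=\sum_k|c_k|^2$ and $\mathcal L^\star d=\sum_k c_k\,p_k\phi_{p_k}$, the desired estimate is equivalent to the assertion that $\{p_k\phi_{p_k}\}_k$ is a Riesz sequence in $L^2((0,\varepsilon))$.

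The heart of the argument is thus a lower Riesz bound, which I would obtain by comparison with the sine system $\{\sin(p_k\pi x)\}_k$. Reflecting oddly onto the symmetric interval $(-\varepsilon,\varepsilon)$ of length $2\varepsilon$ turns this into the exponential system $\{e^{\pm\i\pi p_k x}\}_k$ with frequency set $\Lambda=\{\pm\pi p_k\}$. This set is uniformly discrete, with gaps at least $\pi$, and a direct density computation gives $D^+(\Lambda)=D^+(P)/\pi<\varepsilon/\pi=\tfrac{2\varepsilon}{2\pi}$, which is exactly the threshold in Kahane's theorem (the setting of (Q1)) associated to an interval of length $2\varepsilon$. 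Invoking Kahane's result together with the equivalence between interpolating sequences and Riesz sequences of exponentials (\cite[Section 4]{Young1980AnIT}), $\{e^{\pm\i\pi p_k x}\}_k$ is a Riesz sequence in $L^2((-\varepsilon,\varepsilon))$. Specializing the coefficients to the antisymmetric combination $c_k^{\pm}=\pm c_k/(2\i)$ and using $\norm{g}_{L^2((-\varepsilon,\varepsilon))}^2=2\norm{g}_{L^2((0,\varepsilon))}^2$ for odd $g$ then transfers the lower Riesz bound to $\{\sin(p_k\pi x)\}_k$ in $L^2((0,\varepsilon))$.

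It remains to pass from $\sin(p_k\pi x)$ to the true eigenfunctions. The eigenfunction asymptotics used earlier (cf. \cite{FULTON1994297,Pschel1986InverseST}) give $p_k\phi_{p_k}-\tfrac1\pi\sin(p_k\pi x)=O(p_k^{-1})$ in $L^2((0,\varepsilon))$, and since $p_k\geq k$ these discrepancies are square-summable. For a tail $k\geq K$ with $K$ large, the total squared discrepancy is smaller than the lower Riesz bound of $\{\tfrac1\pi\sin(p_k\pi x)\}_{k\geq K}$, so the Paley--Wiener stability theorem yields that $\{p_k\phi_{p_k}\}_{k\geq K}$ is a Riesz sequence. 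Adjoining the finitely many remaining vectors $p_1\phi_{p_1},\dots,p_{K-1}\phi_{p_{K-1}}$ preserves this property, because the full system $\{\phi_{p_k}\}_k$ is linearly independent on $(0,\varepsilon)$: applying powers of $-\tfrac{d^2}{dx^2}+V$ to a finite combination vanishing on $(0,\varepsilon)$ and running a Vandermonde argument in the distinct eigenvalues forces all coefficients to vanish. This establishes the Riesz bound, hence the estimate for $\mathcal L^\star$, and with it the theorem.

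I expect the decisive difficulty to lie in the second step: the passage through the sharp density threshold, where the strict inequality $D^+(P)<\varepsilon$ is indispensable and where Kahane's near-optimal interpolation theorem does the essential work — the same near-optimality yielding the necessity claim $D^+(P)\leq\varepsilon$ recorded in the footnote. By contrast, the functional-analytic reduction of the first step and the perturbation of the third are comparatively routine, the only mild care being the correct incorporation of the finitely many low modes.
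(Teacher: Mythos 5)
Most of your argument is sound, and it follows a genuinely different route from the paper: where the paper composes Kahane's interpolation operator with Remling's Paley--Wiener theorem (Lemma~\ref{lem_Remling}), whose operator $\mathcal K_{\tau,\varepsilon}$ converts sine transforms into eigenfunction transforms exactly and for \emph{all} modes simultaneously, you replace Remling by a perturbation argument. Your functional-analytic reduction, the identification of the adjoint, the passage from Kahane's theorem to a Riesz sequence of exponentials on $(-\varepsilon,\varepsilon)$, and the Paley--Wiener stability step for the tail $k\geq K$ all check out (for the quadratic closeness one can even use the paper's own bounds \eqref{ptwise_estimate_1} and \eqref{eigen_asymp}, so $V\in L^\infty$ suffices). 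The genuine gap is in the last step, adjoining the finitely many low modes. To conclude that $\{p_k\phi_{p_k}\}_{k\geq 1}$ is a Riesz sequence once $\{p_k\phi_{p_k}\}_{k\geq K}$ is one, you need not linear independence of finite combinations but the strictly stronger statement
\begin{equation*}
\mathrm{span}\{\phi_{p_j}\,:\,j<K\}\;\cap\;\overline{\mathrm{span}}\,\{\phi_{p_k}\,:\,k\geq K\}=\{0\}\qquad \text{in } L^2((0,\varepsilon)),
\end{equation*}
where the second span is \emph{closed} and therefore contains infinite $\ell^2$-combinations. Your Vandermonde argument (applying powers of $-\frac{d^2}{dx^2}+V$) only excludes finite relations; it says nothing about a finite combination being an $L^2((0,\varepsilon))$-limit of tail combinations, because such a limit solves no ODE and cannot be differentiated. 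Moreover this intersection property is not a routine addendum: using orthogonality of the $\phi_k$ on $(0,1)$, a nonzero intersection element produces a nonzero $w\in L^2((0,1))$ vanishing on $(0,\varepsilon)$ with $(w,\phi_j)_{L^2((0,1))}=0$ for all $j\in\N\setminus P$, so what you actually must prove is completeness of $\{\phi_j|_{(\varepsilon,1)}\}_{j\in\N\setminus P}$ in $L^2((\varepsilon,1))$ --- a statement of essentially the same nature and depth as the theorem itself. The decisive difficulty of your route lies here, not in the density bookkeeping of the Kahane step.

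The gap is fixable, but it needs a real argument. One way, in the spirit of Section~\ref{sec_thm_spectral}: let $\widetilde\psi(\cdot,z)$ solve \eqref{Psi_def_z} normalized at the right endpoint, $\widetilde\psi(1,z)=0$, $\p_x\widetilde\psi(1,z)=1$, so that $\phi_j$ is a nonzero multiple of $\widetilde\psi(\cdot,\lambda_j)$. For $w$ supported in $[\varepsilon,1]$ the entire function $\zeta\mapsto\int_\varepsilon^1 w(x)\,\widetilde\psi(x,\zeta^2)\,dx$ is bounded on $\R$ and of exponential type at most $1-\varepsilon$ (the growth is governed by $e^{|\mathrm{Im}\,\zeta|(1-x)}$), while it vanishes on $\{\sqrt{\lambda_j}\}_{j\notin P}$, a set whose counting density is at least $\frac{1-D^+(P)}{\pi}>\frac{1-\varepsilon}{\pi}$; Theorem~\ref{thm_levinson} then forces the function to vanish identically, hence $w\perp\phi_j$ for all $j\in\N$ and $w=0$. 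With this completeness in hand, your adjunction of the low modes (or, more directly, the standard ``bounded below $+$ compact perturbation $+$ injective $\Rightarrow$ bounded below'' argument applied to the full system) goes through. Alternatively, one can simply follow the paper: Remling's theorem has no exceptional modes and makes the whole issue disappear, which is precisely why the paper uses it.
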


Note that the theorem is trivial when $P\subset \N$ is finite and just follows from linear independence of restriction of eigenfunctions $\{\phi_k\}_{k=1}^\infty$ on the set $(0,\varepsilon)$ which is why we assume that it $P$ is countably infinite. Let us also remark that if instead of Dirichlet eigenfunctions that are normalized through initial data condition \eqref{initial_data}, one works with $L^2((0,1))$-orthonormal eigenfunctions in \eqref{interpolation_eq}, then the linear map $L$ can be constructed as a continuous mapping from $\ell^2(\N)$ into $L^2((0,\varepsilon))$. Let us also mention that similar results can be obtained for  eigenfunctions of \eqref{eigen_functions} subject to Neumann or more general Robin type boundary conditions. Let us also mention that such interpolation results form a bridge between discrete data and continuous function spaces, allowing for the synthesis of missing information from partial or incomplete measurements.

\subsection{Proof of Theorem~\ref{thm_interpolation}}

In order to prove the theorem, we first need to recall an interpolation theorem due to Kahane \cite{Kahane1957SurLF} as well as a Paley-Wiener type result for 1-D Schr\"odinger operators due to Remling \cite{Remling2002SchrdingerOA,Remling2003InverseST}. 

\begin{lemma}[\cite{Kahane1957SurLF}]
	\label{lem_Kahane}
	Let $\delta>0$. Let $\Gamma=\{\gamma_k\}_{k=1}^{\infty}\subset (0,\infty)$ be a uniformly discrete set (i.e. the pairwise distances of its elements has a uniform positive lower bound) and suppose that its upper uniform density satisfies
	\begin{equation}\label{density_bound}
		D^+(\Gamma) < \frac{\delta}{\pi}.
	\end{equation}
	There exists a bounded linear operator $\mathcal I_\delta: \ell^2(\N)\to L^2((0,\delta))$ such that given any $a=(a_1,a_2,\ldots) \in \ell^2(\N)$, there holds
	\begin{equation}\label{interpolation_eq}
		\int_0^\delta \, (\mathcal I_\delta a)(x) \,\sin(\gamma_k x)\,dx=a_k \quad \forall\, k\in \N.\end{equation}
\end{lemma}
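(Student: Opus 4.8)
The plan is to recognize Lemma~\ref{lem_Kahane} as the sine-moment reformulation of Kahane's classical interpolation theorem on a \emph{symmetric} interval, and then to produce the bounded operator $\mathcal I_\delta$ by a short functional-analytic duality once the underlying Riesz--Fischer bound is in hand. First I would symmetrize. Writing $\sin(\gamma_k x)=\frac{1}{2\i}\bigl(e^{\i\gamma_k x}-e^{-\i\gamma_k x}\bigr)$ and introducing the two-sided frequency set $\Lambda:=\Gamma\cup(-\Gamma)\subset\R\setminus\{0\}$, the sine system $\{\sin(\gamma_k\cdot)\}_k$ on $(0,\delta)$ becomes, after odd extension to $(-\delta,\delta)$, a linear image of the exponential system $\{e^{\i\lambda x}\}_{\lambda\in\Lambda}$ on $(-\delta,\delta)$, with the $\ell^2$ norm of the sine coefficients comparable to that of the exponential coefficients. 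I would record the two elementary facts driving the reduction: $\Lambda$ is uniformly discrete, with separation bounded below by $\min\{q,2\min_k\gamma_k\}$ (here $q>0$ is the separation of $\Gamma$, and $\Gamma\subset(0,\infty)$ together with uniform discreteness forces $\min_k\gamma_k>0$, keeping $0$ away from $\Lambda$); and $D^+(\Lambda)=D^+(\Gamma)$, since adjoining $-\Gamma$ only enlarges the gap around the origin and cannot raise the upper density in any far-out window. Consequently the hypothesis $D^+(\Gamma)<\delta/\pi$ is exactly the Beurling-type density condition $D^+(\Lambda)<\frac{2\delta}{2\pi}$ attached to the interval $[-\delta,\delta]$ of length $2\delta$.

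The analytic core is then the statement that, under this density condition, $\{e^{\i\lambda x}\}_{\lambda\in\Lambda}$ is a Riesz--Fischer (moment) sequence in $L^2((-\delta,\delta))$; equivalently, the synthesis map $(b_\lambda)\mapsto\sum_\lambda b_\lambda e^{\i\lambda\cdot}$ is bounded below. This is precisely Kahane's theorem \cite{Kahane1957SurLF} (see also \cite[Section 4]{Young1980AnIT}), which I would invoke directly. Transporting this lower bound through the odd-extension identity then controls the adjoint $M^\star$ of the sine-moment map $M\colon L^2((0,\delta))\to\ell^2(\N)$, $(Mf)_k=\int_0^\delta f\,\sin(\gamma_k\cdot)$. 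Indeed $M^\star a=\sum_k a_k\sin(\gamma_k\cdot)$ (the sines being real), and since a sum of odd functions has $L^2((0,\delta))$ norm equal to $2^{-1/2}$ times its $L^2((-\delta,\delta))$ norm, I obtain $\|M^\star a\|_{L^2((0,\delta))}^2\geq c\,\|a\|_{\ell^2}^2$ for some $c>0$.

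Finally I would close the argument by standard duality. Uniform discreteness of $\Gamma$ gives the Plancherel--P\'olya/Bessel upper bound, so $M$ (hence $M^\star$) is bounded; combined with the lower bound just established, the self-adjoint operator $MM^\star$ satisfies $\langle MM^\star a,a\rangle=\|M^\star a\|^2\geq c\|a\|^2$ and is therefore boundedly invertible on $\ell^2(\N)$. Setting $\mathcal I_\delta:=M^\star(MM^\star)^{-1}$ yields a bounded linear operator with $M\mathcal I_\delta=\mathrm{Id}$, which is exactly the interpolation identity \eqref{interpolation_eq}.

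I would expect the main obstacle here to be one of attribution rather than of fresh analysis: the genuinely deep step is the Riesz--Fischer/interpolation property of the exponential system at the sharp density threshold, which is Kahane's theorem itself and which I treat as the cited black box. The only points in the reduction demanding care are the exact matching of the density constant (the factor $2$ reconciling the interval length $2\delta$ with the two-sided frequency set $\Lambda$) and the verification that symmetrization preserves uniform discreteness near the origin; both are elementary bookkeeping, and the passage from the lower bound on $M^\star$ to the explicit bounded right inverse $\mathcal I_\delta$ is routine Hilbert-space duality.
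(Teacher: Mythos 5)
Your proposal is correct, and there is nothing in the paper to compare it against step by step: the paper states Lemma~\ref{lem_Kahane} purely as a citation to Kahane \cite{Kahane1957SurLF} and never proves it, so what you have done is reconstruct the (standard but nowhere-written) reduction from Kahane's exponential interpolation theorem on a symmetric interval to the sine-moment form \eqref{interpolation_eq}. Your bookkeeping checks out: $\inf_k\gamma_k>0$ does follow from uniform discreteness plus $\Gamma\subset(0,\infty)$ (any bounded interval meets a separated set in finitely many points), so $\Lambda=\Gamma\cup(-\Gamma)$ is separated; the density identity $D^+(\Lambda)=D^+(\Gamma)$ holds, though your stated reason (``cannot raise the upper density in any far-out window'') slightly misplaces the issue --- the windows needing care are exactly those straddling the origin, where the count splits subadditively as $|\Gamma\cap(0,|x|)|+|\Gamma\cap(0,x+r)|$ over two half-windows of total length $r$, giving $\leq(D^+(\Gamma)+\epsilon)r+O(1)$; and the constant matching $D^+(\Lambda)<\tfrac{2\delta}{2\pi}=\tfrac{\delta}{\pi}$ agrees with \eqref{density_bound}. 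Your route through $\mathcal I_\delta=M^\star(MM^\star)^{-1}$, with the lower bound on $M^\star$ from Kahane's Riesz--Fischer property and the upper (Bessel) bound from Plancherel--P\'olya, is sound; note only that Kahane's theorem, in the form the paper records as {\bf(Q1)}, already supplies a \emph{bounded solution operator} $L$ on $PW_{[-\delta,\delta]}$, so a marginally shorter proof feeds the odd-symmetrized data $c_{\pm\gamma_k}=\pm a_k/(2\mathrm{i})$ into $L$ and restricts to $(0,\delta)$, bypassing the $MM^\star$ duality (which in effect re-proves the equivalence of the interpolation-operator and moment-problem formulations). This symmetrization is also exactly how the paper itself handles the parallel cosine situation in the proof of Lemma~\ref{lem_completeness}, via even extension and \cite[Theorem 7, pg.~129]{Young1980AnIT}, so your argument is faithful in spirit to the authors' intended use of the citation.
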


Next, we recall an spectral result due to Remling, see \cite[Theorem 2.2]{Remling2002SchrdingerOA} (cf. \cite{Remling2003InverseST}). 
\begin{lemma}
	\label{lem_Remling}
	Let $V \in L^\infty((0,1))$ and let $\tau\in \R$. Given each $z\in \C$, let $\psi(\cdot,z) \in W^{2,\infty}((0,1))$ be the unique solution to
	\begin{equation}\label{psi_expansion} -\p^2_x\psi(x,z) + V(x)\, \psi(x,z) = z \,\psi(x,z) \quad \text{on $(0,1)$},\end{equation}
	subject to $ \psi(0,z)=0$ and $\p_x \psi(0,z)=1.$ Let us define for each $\delta\in (0,1)$,
	$$ \mathscr S_\delta:= \left\{ \int_0^\delta f(x)\,\psi(x,z)\,dx\,:\, f\in L^2((0,\delta))\right\}.$$
	The set $\mathscr S_\delta$ is independent of $V$. In particular, there holds
	$$ \mathscr S_\delta=\left\{\int_{0}^\delta f(x)\,\frac{\sin(\sqrt{z+\tau}\, x)}{\sqrt{z+\tau}}\,dx\,:\, f\in L^2((0,\delta))\right\}\footnote{We are using the principal branch of the logarithm to define $\sqrt{z+\tau}$ and we are defining $\frac{\sin(\sqrt{z+\tau}\, x)}{\sqrt{z+\tau}}$ at $z=-\tau$ via continuity. Note that Remling's theorem is stated with $\tau=0$ but our version follows from considering the set $\mathscr S_N$ for a constant potential $\tau$.}.$$
\end{lemma}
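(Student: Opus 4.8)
The plan is to realize the desired solution operator as a composition $L = \mathcal U \circ \mathcal I_\varepsilon \circ W$ of three bounded maps: a diagonal reweighting $W:\ell^2_P(\N)\to \ell^2(\N)$, Kahane's free interpolation operator $\mathcal I_\varepsilon$ from Lemma~\ref{lem_Kahane}, and a transmutation operator $\mathcal U$ on $L^2((0,\varepsilon))$ extracted from Remling's Lemma~\ref{lem_Remling}. First I would dispose of the finitely many nonpositive eigenvalues: since $\lambda_k\to +\infty$, I fix $\tau>0$ so large that $\lambda_k+\tau>0$ for every $k$, and set $\gamma_k:=\sqrt{\lambda_{p_k}+\tau}>0$. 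Recalling that $\phi_{p_k}$ is exactly the restriction to $(0,\varepsilon)$ of $\psi(\cdot,\lambda_{p_k})$ (both solve the same ODE with the same data $\psi(0)=0,\ \p_x\psi(0)=1$ from Lemma~\ref{lem_Remling}), I note that $\sqrt{\lambda_{p_k}+\tau}=\gamma_k$, so that the shifted free kernel in Remling's identity, evaluated at $z=\lambda_{p_k}$, is precisely $\frac{\sin(\gamma_k x)}{\gamma_k}$.

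Next I would verify the hypotheses of Kahane's lemma for $\Gamma=\{\gamma_k\}$ on the interval $(0,\varepsilon)$ (i.e.\ $\delta=\varepsilon$). Using the eigenvalue asymptotics \eqref{eigen_asymp}, $\gamma_k=\sqrt{\lambda_{p_k}+\tau}=p_k\pi+O(1/p_k)$. Since the $p_k$ are distinct increasing integers, this yields uniform discreteness of $\Gamma$, and a direct counting argument shows $D^+(\Gamma)=\tfrac{1}{\pi}D^+(P)<\tfrac{\varepsilon}{\pi}$, which is exactly the density bound \eqref{density_bound} required by Lemma~\ref{lem_Kahane}. Thus there is a bounded $\mathcal I_\varepsilon:\ell^2(\N)\to L^2((0,\varepsilon))$ with $\int_0^\varepsilon(\mathcal I_\varepsilon a)(x)\sin(\gamma_k x)\,dx=a_k$. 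The same asymptotics $\gamma_k^2=\lambda_{p_k}+\tau\sim(p_k\pi)^2$ show that the diagonal map $W:(c_k)_k\mapsto(\gamma_k c_k)_k$ is bounded, with bounded inverse, from $\ell^2_P(\N)$ (normed by \eqref{l21_norm}) into $\ell^2(\N)$; this is precisely why the weighted space $\ell^2_P(\N)$ is the correct domain.

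Then I would invoke Remling's Lemma~\ref{lem_Remling}: the equality of the sets $\mathscr S_\varepsilon$ says that for every $g\in L^2((0,\varepsilon))$ there is $f\in L^2((0,\varepsilon))$ with $\int_0^\varepsilon f(x)\psi(x,z)\,dx=\int_0^\varepsilon g(x)\frac{\sin(\sqrt{z+\tau}x)}{\sqrt{z+\tau}}\,dx$ for all $z\in\C$; uniqueness of $f$ follows from injectivity of the moment map $f\mapsto\int_0^\varepsilon f\,\psi(\cdot,z)$, itself a consequence of the density of $\{\psi(\cdot,z)\}_z$ in $L^2((0,\varepsilon))$. This defines a linear bijection $\mathcal U:g\mapsto f$, and a closed-graph argument — for each fixed $z$ both sides are continuous functionals, so the defining identity survives $L^2$-limits — shows $\mathcal U$ is bounded. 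Setting $L:=\mathcal U\circ\mathcal I_\varepsilon\circ W$ and evaluating Remling's identity at $z=\lambda_{p_k}$ gives, with $a=Wc$,
\begin{equation*}
\int_0^\varepsilon (Lc)(x)\,\phi_{p_k}(x)\,dx=\int_0^\varepsilon (\mathcal I_\varepsilon a)(x)\,\frac{\sin(\gamma_k x)}{\gamma_k}\,dx=\frac{a_k}{\gamma_k}=c_k,
\end{equation*}
which is exactly the interpolation condition \eqref{interpolation_schrodinger}.

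The step I expect to require the most care is the extraction of a \emph{bounded} transmutation $\mathcal U$: since Lemma~\ref{lem_Remling} asserts only equality of ranges as sets, the well-definedness (injectivity of the moment map), linearity, and boundedness (closed graph) must all be supplied separately, and one should confirm that these are genuinely available from the cited statement rather than merely from the classical triangular transmutation kernel. A secondary point demanding attention is the bookkeeping around the spectral shift $\tau$, which simultaneously forces $\gamma_k\in(0,\infty)$ so that Kahane's lemma applies and matches the shifted free kernel in Remling's formula. By comparison, the density computation $D^+(\Gamma)=\tfrac1\pi D^+(P)$ and the near-optimality claim of the footnote (necessity of $D^+(P)\le\varepsilon$, which transfers through the same reduction to the necessity half of Kahane's theorem) are comparatively routine once the reduction is in place.
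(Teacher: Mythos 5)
Your proposal does not prove the assigned statement. What you have written is a proof of Theorem~\ref{thm_interpolation} (the interpolation theorem), and it explicitly takes Lemma~\ref{lem_Remling} as an input: the third step begins ``Then I would invoke Remling's Lemma~\ref{lem_Remling}: the equality of the sets $\mathscr S_\varepsilon$ says that\dots''. As an argument for Lemma~\ref{lem_Remling} itself this is circular --- nowhere do you address the actual content of the lemma, namely that the space of finite transforms $\mathscr S_\delta=\bigl\{\int_0^\delta f(x)\,\psi(x,z)\,dx : f\in L^2((0,\delta))\bigr\}$ of entire functions of $z$ does not depend on the potential $V$, and in particular coincides with the shifted free model $\int_0^\delta f(x)\,\frac{\sin(\sqrt{z+\tau}\,x)}{\sqrt{z+\tau}}\,dx$. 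The density computation for $\Gamma=\{\gamma_k\}$, the weight $W$, and the closed-graph extraction of the transmutation $\mathcal U$ are all irrelevant to this claim; they belong to the proof of Theorem~\ref{thm_interpolation}, where indeed the paper carries out essentially the composition you describe.

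In the paper, Lemma~\ref{lem_Remling} is not reproved but cited: the $V$-independence of $\mathscr S_\delta$ is \cite[Theorem 2.2]{Remling2002SchrdingerOA} (cf.\ \cite{Remling2003InverseST}), stated there with the free kernel at $\tau=0$; the version with the $\tau$-shifted kernel then follows, as the footnote indicates, by applying that same $V$-independence a second time with a constant potential, whose solution $\psi$ is exactly $\frac{\sin(\sqrt{z+\tau}\,x)}{\sqrt{z+\tau}}$ --- a one-line bookkeeping step your write-up never performs. If you wanted a self-contained proof instead of a citation, the route you dismiss in your final paragraph is the correct one: write $\psi(x,z)=\frac{\sin(\sqrt{z+\tau}\,x)}{\sqrt{z+\tau}}+\int_0^x K(x,t)\,\frac{\sin(\sqrt{z+\tau}\,t)}{\sqrt{z+\tau}}\,dt$ with a bounded triangular (Povzner--Levitan type) transmutation kernel $K$ between the two Sturm--Liouville operators; then $\int_0^\delta f\,\psi(\cdot,z)\,dx=\int_0^\delta\bigl((I+K^\ast)f\bigr)(t)\,\frac{\sin(\sqrt{z+\tau}\,t)}{\sqrt{z+\tau}}\,dt$, and since $I+K^\ast$ is a Volterra perturbation of the identity, hence boundedly invertible on $L^2((0,\delta))$, the two sets coincide. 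That argument also delivers, as a byproduct, the bounded invertible map $\mathcal K_{\tau,\delta}$ that you instead had to manufacture via an injectivity-plus-closed-graph argument, and it would make your proof of Theorem~\ref{thm_interpolation} noncircular; but as submitted, the lemma you were asked to prove is assumed, not proved.
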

In view of the above result by Remling, we define the bounded linear map 
$$ \mathcal K_{\tau,\delta} : L^2((0,\delta)) \to L^2((0,\delta))$$ 
as follows. Given any $f\in L^2((0,\delta))$, we define
\begin{equation}\label{T_map_def}
	\mathcal K_{\tau,\delta} f \in L^2((0,\delta))\end{equation} 
as the unique function that satisfies
$$
\int_{0}^\delta f(x)\,\frac{\sin(\sqrt{z+\tau} \,x)}{\sqrt{z+\tau}}\,dx=\int_0^\delta (\mathcal K_{\tau,\delta} f)(x)\,\psi(x,z)\,dx \quad \forall\, z\in \C.
$$
In particular, by setting $z$ to belong to positive real numbers and using spectral measures for 1-D Schr\"odinger operators on the unbounded interval $(0,\infty)$ (see e.g. \cite[Theorem 2.1]{Remling2002SchrdingerOA}), it follows also that there exists $C_{\tau,\delta}>0$ independent of $f$, such that
\begin{equation}\label{K_map_est}
	C_{\tau,\delta}^{-1}\, \|f\|_{L^2((0,\delta))}\leq \|\mathcal K_{\tau,\delta} f\|_{L^2((0,\delta))} \leq  C_{\tau,\delta}\, \|f\|_{L^2((0,\delta))}.
\end{equation} 

We are now ready to state the proof of our interpolation theorem.

\begin{proof}[Proof of Theorem~\ref{thm_interpolation}]
		We fix $\varepsilon\in (0,1)$, $V\in L^{\infty}((0,1))$ and also define
		\begin{equation}
			\label{def_tau}
			\tau = \|V\|_{L^\infty((0,1))}.
		\end{equation}
		In particular, 
		\begin{equation}\label{eigen_positive}
			\tau + \lambda_{k}>0 \quad \forall\, k\in \N.
		\end{equation}
			Let $c\in \ell^2_P(\N)$ be arbitrary. We will define $Lc$ as follows. First, let $\tau$ be as in \eqref{def_tau} and recall that \eqref{eigen_positive} holds. Define for each $k\in \N$, $\gamma_k=\sqrt{\lambda_{p_k}+\tau}$. By eigenvalue asymptotics, see e.g. \cite[Theorem 4, page 35]{Pschel1986InverseST}, there holds
			\begin{equation}\label{spec_asymp_0}
				\sqrt{\lambda_k+\tau}=k\pi+ O(\frac{1}{k}),  \quad \text{as $k\to \infty$},
			\end{equation}
			where the modulus of convergence to zero depends on $\|V\|_{L^\infty([0,1])}$. The asymptotic expression \eqref{spec_asymp_0} implies that the set $\Gamma=\{\gamma_k\}_{k=1}^\infty\subset (0,\infty)$ is uniformly discrete and also that 
			$$ D^+(\Gamma) <  \frac{\varepsilon}{\pi}.$$
			We apply Lemma~\ref{lem_Kahane} (with $\delta=\varepsilon$, $\Gamma$ as above and $a_k=\gamma_k\,c_k$ for all $k\in \N$) to deduce that there exists a function $\widetilde f$ (depending on $c$) satisfying
			\begin{equation}\label{interpolation_eq_pf}
				\int_0^{\varepsilon} \, \widetilde{f}(x) \,\frac{\sin(\sqrt{\lambda_{p_k}+\tau}\, x)}{\sqrt{\lambda_{p_k}+\tau}}\,dx=c_k \quad \forall\, k\in \N,\end{equation}
			and with $\|\widetilde{f}\|_{L^2((0,\varepsilon)}\leq C'\|a\|_{\ell^2(\N)}\leq C\, \|c\|_{\ell^2_P(\N)}$ for some $C>0$ independent of $c$. The proof of the theorem is now completed by defining $Lc=\mathcal K_{\tau,\varepsilon}\widetilde{f}$, thanks to Lemma~\ref{lem_Remling}. The linearity and continuity of $L$ also follows from that of the operator $\mathcal K_{\tau,\varepsilon}$.

	\end{proof}

\subsection*{Acknowledgements}
We are very grateful to Marco Marletta for helpful and stimulating discussions related to our inverse spectral result. 
\subsection*{Funding}
The work of Y. Kian is supported by the French National Research Agency ANR and Hong Kong RGC Joint Research Scheme for the project IdiAnoDiff (grant ANR-24-CE40-7039).
\subsection*{Conflict of interest}
The author has no conflicts of interest to declare that are relevant to this article.
\subsection*{Ethical statement}
This study was conducted in accordance with all relevant ethical guidelines and regulations.
\subsection*{Informed Consent}
All participants provided informed consent prior to taking part in the study.
\subsection*{Data availability statement}
Data sharing not applicable to this article as no datasets were generated or analysed during the current study.

\bibliography{refs} 

\bibliographystyle{alpha}

\end{document}